
\documentclass[final,leqno,onefignum,onetabnum]{siamltex1213}

\usepackage{amssymb,amsmath,enumerate,empheq}
\usepackage{mathpazo}
\newtheorem{defn}{Definition}

\newtheorem{prop}{Proposition}
\newtheorem{rem}{Remark}
\newtheorem{assum}{Assumption}
\newtheorem{thm}{Theorem}
\newtheorem{ex}{Example}

\newtheorem{lem}{Lemma}

\newcommand{\norm}[1]{\Vert#1\Vert}

\newcommand{\abs}[1]{\left\vert#1\right\vert}
\newcommand{\set}[1]{\left\{#1\right\}}
\newcommand{\Real}{\mathbb R}
\newcommand{\eps}{\varepsilon}
\renewcommand{\phi}{\varphi}

\renewcommand{\d}{\mathbf{d}}
\newcommand{\dt}{\widetilde{\mathbf{d}}}

\renewcommand{\subset}{\subseteq}

\newcommand{\U}{\mathcal{U}}

\newcommand{\dom}{\text{dom}\,}

\newcommand{\F}{\mathcal{F}}
\newcommand{\G}{\mathcal{G}}
\newcommand{\M}{\mathcal{M}}
\newcommand{\Mbl}{\mathcal{M}_{b,\lambda}^{\Delta}}
\newcommand{\wMl}{\widehat{\mathcal{M}}^{\Delta}_{\lambda}}
\newcommand{\wB}{\widehat{\mathcal{B}}}
\newcommand{\bl}{b,\lambda}
\newcommand{\C}{\mathcal{C}}
\newcommand{\W}{\mathcal{W}}
\newcommand{\K}{\mathcal{K}}
\newcommand{\KL}{\mathcal{KL}}
\newcommand{\D}{\mathcal{D}}
\newcommand{\T}{\mathcal{T}}

\newcommand{\A}{\mathcal{A}}
\newcommand{\AD}{\mathcal{A}^{\Delta}}
\newcommand{\Atj}{\mathcal{A}_{[t,j]}^{\Delta}}
\newcommand{\HM}{\mathcal{H_M}}

\renewcommand{\H}{\mathcal{H}}

\newcommand{\gph}{\text{gph}\,}
\newcommand{\rge}{\text{rge}\,}
\newcommand{\ra}{\rightarrow}
\newcommand{\raa}{\rightrightarrows}
\newcommand{\rap}{\overset{\text{gph}}{\longrightarrow}}

\newcommand{\Z}{\mathbb{Z}}
\newcommand{\B}{\mathcal{B}}

\newcommand{\limg}{\text{lim}\,\text{gph}}
\newcommand{\ms}{\mathcal{M}_S}

\newcommand{\MD}{\mathcal{M}^{\Delta}}
\newcommand{\wMDl}{\widehat{\M}_{\lambda}^{\Delta}}
\newcommand{\HMD}{\mathcal{H}_{\mathcal{M}}^{\Delta}}
\newcommand{\SHMD}{\mathcal{S}_{\mathcal{H}_\mathcal{M}^{\Delta}}}

\title{Hybrid Systems with Memory: \\Existence and Well-posedness of Generalized Solutions\thanks{This work was supported, in part, by Royal Society grant IE130106, EU FP7 grant PCIG13-GA-2013-617377, NSERC grant RGPIN-2016-04139, US AFOSR grant FA9550-12-1-0127, US NSF grant ECCS-1232035, AFOSR FA9550-15-1-0155, and NSF ECCS-1508757.}} 

\author{Jun Liu\thanks{Department of Applied Mathematics, University of Waterloo, Waterloo, Ontario N2L 3G1, Canada (e-mail: j.liu@uwaterloo.ca).} \and Andrew R. Teel\thanks{Center for Control, Dynamical Systems, and Computation and Department of Electrical and Computer Engineering, University of California, Santa Barbara, CA 93106, USA (e-mail: teel@ece.ucsb.edu).}}

\begin{document}
\maketitle
\slugger{sicon}{xxxx}{xx}{x}{x--x}

\begin{abstract}
Hybrid systems with memory refer to dynamical systems exhibiting both hybrid and delay phenomena. While systems of this type are frequently encountered in many physical and engineering systems, particularly in control applications, various issues centered around the robustness of hybrid delay systems have not been adequately dealt with. In this paper, we establish some basic results on a framework that allows to study hybrid systems with memory through generalized concepts of solutions. In particular, we develop the basic existence of generalized solutions using regularity conditions on the hybrid data, which are formulated in a phase space of hybrid trajectories equipped with the graphical convergence topology. In contrast with the uniform convergence topology that has been often used, adopting the graphical convergence topology allows us to establish well-posedness of hybrid systems with memory. We then show that, as a consequence of well-posedness, pre-asymptotic stability of well-posed hybrid systems with memory is robust. 
\end{abstract}

\begin{keywords}
Hybrid systems, time delay, functional inclusions, generalized solutions, basic existence, well-posedness, robust stability.
\end{keywords}

\begin{AMS}34K34, 34A38, 93D09\end{AMS}

\pagestyle{myheadings}
\thispagestyle{plain}
\markboth{J.~Liu and A.~R.~Teel}{Hybrid Systems with Memory}

\section{Introduction}

Hybrid systems with memory refer to dynamical systems exhibiting both hybrid and delay phenomena. Control systems with delayed hybrid feedback and interconnected hybrid systems with network delays are particular examples of such systems. In fact, delays are often inevitable in many control applications \cite{sipahi2011stability} and often cause instability and/or loss of robustness \cite{cloosterman2009stability}.

Motivated by robust stability issues in hybrid feedback control systems, generalized solutions of hybrid inclusions, defined on hybrid time domains, have been proposed and proven effective for hybrid systems without memory  \cite{goebel2004hybrid,goebel2012hybrid,sanfelice2008generalized}. This has led to most of the stability analysis tools and results for classical nonlinear systems, including converse Lyapunov theorems, being successfully extended to hybrid systems (see \cite{goebel2012hybrid,sanfelice2007invariance} and references therein).

The main motivation for the research presented in this paper is to provide a theoretical framework for studying the robust stability of hybrid systems with memory via generalized solutions. An initial step was taken in \cite{liu2012generalized}, which considered a phase space that is equipped with a suitable notion of convergence, namely the graphical convergence topology. Using tools from functional differential inclusions, some basic existence and nominal well-posedness results were established (though detailed proofs were not included). The case considered in \cite{liu2012generalized} assumes that the flow and jump sets are subsets of the Euclidean space. This leaves open the general (and more challenging) case, where the flow and jump sets are subsets of the space of hybrid memory arcs. The more recent paper \cite{liu2014hybrid-ifac} considers this general case. Following these preliminary results, the current paper provides a detailed account of this new development.

While asymptotic stability for hybrid systems with delays has been addressed in the past in various settings (see, e.g., \cite{chen2009input,liu2001uniform,liu2006stability,liu2011generalized,liu2011input,sun2012integral,yan2008stability,yuan2003uniform,zhang2014global}), general results on robust asymptotic stability along the lines of \cite{goebel2012hybrid} for hybrid systems with delays are still not available. This is partially owing to the fact that most current tools and results rely on standard concepts like uniform convergence, while this concept is not well-suited to handle discontinuities caused by jumps in hybrid systems, especially when structural properties of the solutions are concerned. It is from this perspective that we believe it is necessary to formulate hybrid systems with memory using generalized concepts of solutions. 

The main contributions of this paper are twofold, {\color{black} both of which are motivated by stability and control of hybrid systems with delays.} First, we prove the basic existence results for hybrid systems with memory with general hybrid data. While these results extend earlier results in \cite{haddad1981monotone} on functional differential inclusions to hybrid functional inclusions, {\color{black} they are also fundamental to the recent development in stability theory for hybrid systems with delays \cite{liu2016lyapunov,liu2016invariance,liu2016razumikhin}.} Second, {\color{black} motivated by the importance of robust stability in control theory,} {\color{black} we investigate hybrid systems with memory under perturbations. More specifically,} we formulate perturbations of hybrid data for hybrid systems with memory and establish a well-posedness result for hybrid systems with memory satisfying the basic assumptions. As an immediate consequence of this well-posedness result, it is proved that pre-asymptotic stability for well-posed hybrid systems with memory is robust.

The rest of the paper is organized as follows. Preliminaries on hybrid systems data, set-valued analysis, the phase space of hybrid solutions, and regularity assumptions on hybrid data are presented in Section \ref{sec:pre}. A general basic existence result is stated and proved in Section \ref{sec:exist}. Perturbations of hybrid data and well-posedness are defined and proved in Section \ref{sec:well-posed}. As a consequence of well-posedness, results on the robustness of pre-asymptotic stability are presented in Section \ref{sec:robustness}.

\section{Preliminaries}\label{sec:pre}

\subsubsection*{Notation:} $\Real^n$ denotes the $n$-dimensional Euclidean space with its norm denoted by  $\abs{\cdot}$; $\mathbb{Z}$ denotes the set of all integers; $\Real_{\ge 0}= [0,\infty)$, $\Real_{\le 0}=(-\infty,0]$, $\Z_{\ge 0}=\set{0,\,1,\,2,\,\cdots}$, and $\Z_{\le 0}=\set{0,\,-1,\,-2,\,\cdots}$; $C([a,b],\Real^n)$ denotes the set of all continuous functions from $[a,b]$ to $\Real^n$.

\subsection{Hybrid systems with memory}

We start with the definition of hybrid time domains and hybrid arcs
\cite{goebel2012hybrid,goebel2006solutions} for hybrid systems and generalize them in order to define hybrid systems with memory.

\begin{defn}\em \label{def:hybridarc}
{\color{black} Consider a subset $E\subset \Real\times \Z$ with $E=E_{\ge 0}\cup E_{\le 0}$, where $E_{\ge 0}:=(\Real_{\ge 0}\times \Z_{\ge 0})\cap E$ and $E_{\le 0}:=(\Real_{\le 0}\times \Z_{\le 0})\cap E$. The set $E$ is called a \emph{compact hybrid time domain with memory} if}
$$
E_{\ge 0}=\bigcup_{j=0}^{J-1}([t_j,t_{j+1}],j)
$$
and
$$
E_{\le 0}=\bigcup_{k=1}^{K}([s_k,s_{k-1}],-k+1)
$$
for some finite sequence of times $s_{K}\le\cdots\le s_1\le s_0=0=t_0\le t_1\le \cdots\le t_J$. It is called a \emph{hybrid time domain with memory} if, for all $(T,J)\in E_{\ge 0}$ and all $(S,K)\in \Real_{\ge 0}\times \Z_{\ge 0}$,
$
(E_{\ge 0}\cap ([0,T]\times\set{0,\,1,\,\cdots,\,J})) \cup (E_{\le 0}\cap ([-S,0]\times\set{-K,\,-K+1,\,\cdots,\,0}))
$
is a compact hybrid time domain with memory. The set $E_{\le 0}$ is called a \emph{hybrid memory domain}.
\end{defn}

\begin{defn}\em \label{def:memoryarc}
A \emph{hybrid arc with memory} consists of a hybrid time domain with memory, denoted by $\dom x$, and a function $x:\,\dom x\ra \Real^n$ such that $x(\cdot,j)$ is locally absolutely continuous on $I^{j}=\set{t:\,(t,j)\in dom\,x}$ for each $j\in\Z$ such that $I^{j}$ has nonempty interior. In particular, a hybrid arc $x$ with memory is called a \emph{hybrid memory arc} if its domain is a hybrid memory domain.  
We shall simply use the term \emph{hybrid arc} if we do not have to distinguish between the above two hybrid arcs. We write $\dom_{\ge 0}(x):=\dom x\cap(\Real_{\ge 0}\times \Z_{\ge 0})$ and  $\dom_{\le 0}(x):=\dom x\cap(\Real_{\le 0}\times \Z_{\le 0})$.
\end{defn}

{\color{black} Fig. \ref{fig:memory} shows the graph of a hybrid memory arc and its domain. As we shall see in what follows, hybrid memory arcs play the role of initial data in hybrid systems with memory. They are essentially a collection of pieces of continuous functions defined on closed intervals. This is different from the classical formulation, where initial data for hybrid systems with delays (e.g., impulsive delay differential equations \cite{ballinger1999existence}) are taken to be piecewise continuous functions.}

\begin{figure}[ht!]
\centering
    \includegraphics[width=0.95\textwidth]{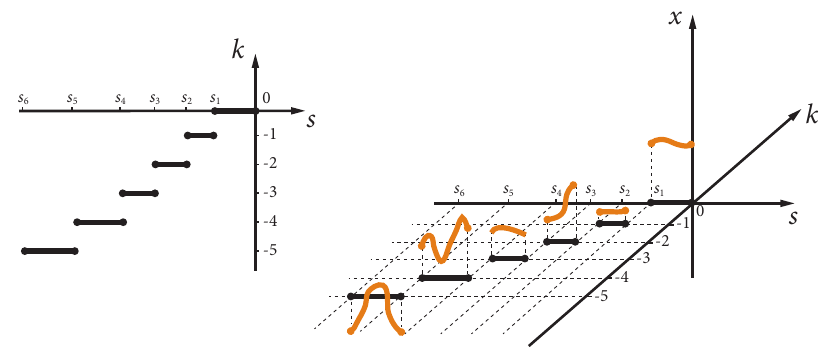}
  \caption{A typical hybrid memory arc (right) depicted together with its hybrid memory domain (left).}\label{fig:memory}
\end{figure}

{\color{black} In this paper, we consider generalized solutions to hybrid systems with memory given by hybrid arcs. Fig. \ref{fig:hybrid} shows a hybrid arc with its domain. A hybrid arc has a memory part as its initial data. The formal definitions of hybrid systems with memory and their solutions are given next.}

\begin{figure}[ht!]
\centering
    \includegraphics[width=0.95\textwidth]{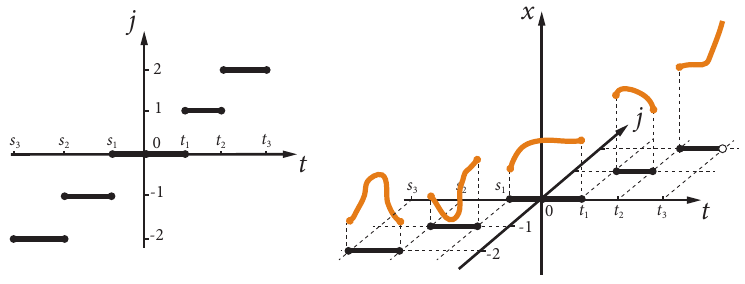}
  \caption{A typical hybrid arc with memory (right) depicted together with its hybrid time domain with memory (left).}\label{fig:hybrid}
\end{figure}

We shall use $\M$ to denote the collection of all hybrid memory arcs. Moreover, given $\Delta\in[0,\infty)$, we denote by $\MD$ the collection of hybrid memory arcs $\phi$ satisfying the following two conditions: (1) $s+k\ge -\Delta-1$ for all $(s,k)\in\dom\phi$; and (2) there exists $(s',k')\in \dom\phi$ such that $s'+k'\le-\Delta$. The constant $\Delta$ roughly captures the size of the memory for the system (see Remark \ref{rem:size} below). Given a hybrid arc $x$, we define an operator $\AD_{[\cdot,\cdot]}x:\,\dom_{\ge 0}(x)\ra \M$ by
\begin{equation}\label{eq:opA}
\AD_{[t,j]}x(s,k)=x(t+s,j+k),
\end{equation}
for all $(s,k)\in\dom (\AD_{[t,j]}x)$, where
\begin{equation}\label{eq:domA}
\dom (\AD_{[t,j]} x):=\Big\{(s,k)\in \Real_{\le 0}\times \Z_{\le 0}:\,(t+s,j+k)\in\dom x,\;s+k\ge -\Delta_{\inf}\Big\},
\end{equation}
where
\begin{equation}\label{eq:deltainf}
\Delta_{\inf}:=\inf\Big\{\delta\ge\Delta:\,\exists(s,k)\;\text{s.t.}\;(t+s,j+k)\in\dom x\;\text{and}\;   s+k=-\delta\Big\}.
\end{equation}
{\color{black} The following result follows immediately from the above definition.  
\begin{prop}
If $\AD_{[0,0]}x\in \MD$, then $\AD_{[t,j]}x\in \MD$ for any $(t,j)\in \dom_{\ge 0}(x)$.
\end{prop}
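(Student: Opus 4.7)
To show $\AD_{[t,j]}x \in \MD$, I will verify the two conditions defining $\MD$: (i) $s+k \ge -\Delta-1$ for every $(s,k)\in\dom(\AD_{[t,j]}x)$; and (ii) there is some $(s',k')\in\dom(\AD_{[t,j]}x)$ with $s'+k'\le -\Delta$. By \eqref{eq:domA}--\eqref{eq:deltainf}, condition (i) amounts to showing $\Delta_{\inf}\le \Delta+1$ at $(t,j)$, while condition (ii) reduces to showing that the infimum in \eqref{eq:deltainf} is attained at an admissible point.

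The plan is to study the auxiliary set
\[
V(t,j) := \{s+k \,:\, (s,k)\in\Real_{\le 0}\times\Z_{\le 0},\ (t+s,j+k)\in\dom x\},
\]
in terms of which \eqref{eq:deltainf} reads $\Delta_{\inf} = -\sup\bigl(V(t,j)\cap(-\infty,-\Delta]\bigr)$. First, because $\AD_{[0,0]}x\in\MD$, there is $(\bar s,\bar k)\in\dom(\AD_{[0,0]}x)$ with $\bar s+\bar k\le -\Delta$; then $(\bar s-t,\bar k-j)$ is feasible for $V(t,j)$ with value $\bar s+\bar k-t-j\le -\Delta$, so $V(t,j)\cap(-\infty,-\Delta]$ is nonempty. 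Next, using Definition~\ref{def:hybridarc}, the portion of $\dom x$ with $t'\le t$ and $j'\le j$ is a union of closed intervals indexed by consecutive integers $j'$, and the map $(t',j')\mapsto t'+j'$ sends each component to a closed interval. A short bookkeeping argument (using that each jump occurs at a single time value shared by two consecutive indices) shows the resulting image intervals are separated by gaps of width \emph{exactly} $1$. Thus $V(t,j)$ is a closed, locally finite union of closed intervals, so $v^{*} := \sup\bigl(V(t,j)\cap(-\infty,-\Delta]\bigr)$ is attained and $\Delta_{\inf}=-v^{*}$.

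Condition (ii) then follows: the admissible pair $(s',k')$ realizing $v^{*}$ lies in $\dom(\AD_{[t,j]}x)$ (the third domain inequality holds with equality) and $s'+k'=v^{*}\le -\Delta$. For (i), either $-\Delta\in V(t,j)$, giving $\Delta_{\inf}=\Delta$, or $-\Delta$ lies in a gap $(v^{*},v^{*}+1)$ of width $1$, giving $v^{*}> -\Delta-1$ and $\Delta_{\inf}<\Delta+1$; in either case every $(s,k)\in\dom(\AD_{[t,j]}x)$ satisfies $s+k\ge -\Delta_{\inf}\ge -\Delta-1$. The main obstacle is the structural ``gap of width exactly $1$'' claim about $V(t,j)$, which rests on the consecutive-integer indexing and shared jump times in Definition~\ref{def:hybridarc}; once it is in hand, the remaining steps are just unwinding definitions together with the closedness/attainment argument.
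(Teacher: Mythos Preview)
Your argument is correct and follows essentially the same route as the paper's: both verify the two defining conditions of $\MD$ by establishing $\Delta\le\Delta_{\inf}<\Delta+1$ and exhibiting an admissible $(s',k')$ with $s'+k'=-\Delta_{\inf}\le-\Delta$. Your structural analysis of $V(t,j)$ (closed intervals with gaps of width exactly~$1$) makes explicit the attainment of the infimum in~\eqref{eq:deltainf}, which the paper simply asserts follows from Definitions~\ref{def:hybridarc}--\ref{def:memoryarc}.
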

\begin{proof}
From Definitions \ref{def:hybridarc} and \ref{def:memoryarc} and the fact that $\AD_{[0,0]}x\in \MD$, we can show that, for any $(t,j)\in \dom_{\ge 0}(x)$, there exists $(s,k)$ such that 
\begin{equation}\label{eq:realization}
(t+s,j+k)\in\dom x,\quad s+k=-\delta,\quad\text{and}\quad\Delta\le \delta<\Delta+1. 
\end{equation}
By (\ref{eq:deltainf}) and (\ref{eq:realization}), we have $\Delta\le\Delta_{\inf}<\Delta+1$. This leads to the following:
\begin{itemize}
\item[(i)] By (\ref{eq:domA}), $s+k\ge -\Delta_{\inf}>-\Delta-1$ for all $(s,k)\in\dom(\AD_{[t,j]}x)$. 
\item[(ii)] Fix some $(s,k)$ and $\delta$ such that (\ref{eq:realization}) holds, then $\delta\ge \Delta_{\inf}$. By (\ref{eq:domA}) and (\ref{eq:realization}), $(s,k)\in \dom(\AD_{[t,j]}x)$ and $s+k=-\delta\le -\Delta$. 
\end{itemize}
This shows $\AD_{[t,j]}x\in \MD$. 
\end{proof}

}

\begin{rem}\label{rem:size}\em 
The constant $\Delta$ plays the role of distinguishing hybrid systems with finite or infinite memory. It is noted that the definition of $\MD$ is slightly unintuitive, as one would expect to it to include hybrid memory arcs with domain sizes exactly up to $\Delta$. We relax this to allow the size of memory arcs in $\MD$ to vary from $\Delta$ to $\Delta+1$. {\color{black} This relaxation allows us to prove the graphical convergence results needed to establish well-posedness and robustness of hybrid systems with memory later in Section \ref{sec:well-posed}.} 
\end{rem}

\begin{defn}\em 
A \emph{hybrid system with memory of size $\Delta$} is defined by a 4-tuple $\HMD=(\C,\F,\D,\G)$:
\begin{itemize}
\item a set $\C\subset\MD$, called the \emph{flow set};
\item a set-valued functional $\F:\MD\raa \Real^n$, called the \emph{flow map};
\item a set $\D\subset\MD$, called the \emph{jump set};
\item a set-valued functional $\G:\MD\raa \Real^n$, called the \emph{jump map}.
\end{itemize}
\end{defn}

{\color{black} Given a hybrid memory arc $\phi\in\MD$ and $g\in\Real^n$, we define $\phi_g^+$ to be a hybrid memory arc in $\MD$ satisfying $\phi_g^+(0,0)  = g$ and $\phi_g^+(s,k-1) = \phi(s,k)$ for all $(s,k)\in\dom \phi$. Furthermore, we define $\G^+(\D):=\set{\phi_g^+:\,g\in\G(\phi),\;\phi\in \D}$. Intuitively, $\phi_g^+$ is the hybrid memory arc following $\phi$ after taking a jump of value $g$; $\G^+(\D)$ is the set of hybrid memory arcs that can be obtained by applying the functional $\G$ on the jump set $\D$.}

\begin{defn}\label{def:sol}\em
 A hybrid arc is a \emph{solution to the hybrid system $\HMD$} if $\AD_{[0,0]}x\in \C\cup \D$ and:
 \begin{enumerate}[(S1)]
 \item for all $j\in \Z_{\ge 0}$ and almost all $t$ such that $(t,j)\in\dom x$,
 \begin{equation}
 \Atj x\in \C,\quad \dot{x}(t,j)\in \F(\Atj x),
 \end{equation}
 \item for all $(t,j)\in\dom x$ such that $(t,j+1)\in\dom x$,
 \begin{equation}
 \Atj x\in \D,\quad x(t,j+1)\in \G(\Atj x).
 \end{equation}
 \end{enumerate}
The solution $x$ is called \emph{nontrivial} if $\dom_{\ge 0}(x)$ has at least two points. It is called \emph{complete} if $\dom_{\ge 0}(x)$ is unbounded. It is called \emph{maximal} if there does not exist another solution $y$ to $\HMD$ such that $\dom x$ is a proper subset of $\dom y$ and $x(t,j)=y(t,j)$ for all $(t,j)\in\dom x$. The set of all maximal solutions to $\HMD$ is denoted by $\SHMD$.
\end{defn}

{\color{black} 

We first use a simple delay differential equation subject to periodic state jumps to illustrate the concepts introduced by the previous definitions. 

\begin{ex}\em 
Consider the scalar delay differential equation 
$$
\dot{x}(t)=x(t-2),\quad t\not\in\Z_{\ge 0},
$$
subject to state jumps defined by 
$$
x(t)=2x(t^{-}),\quad t\in\Z_{\ge 0},
$$
where $x(t^{-})=\lim_{s\ra t^{-}}x(s)$. 
\end{ex}
Normally, to study an impulsive delay differential equation like this, one needs to consider the initial data of the equation to be a piecewise continuous function defined on the interval $[-2,0]$ (see, e.g., \cite{ballinger1999existence}). Here we formulate this system as a hybrid system with memory as follows. 

Consider $\MD$ with $\Delta=3$. Let $\psi=(\phi,\tau)\in\MD$, where both $\phi$ and $\tau$ are 1-dimensional hybrid arcs with shared domains. The variable $\tau$ plays the role of a timer, as seen in hybrid systems without memory \cite{goebel2012hybrid}. The above system can be interpreted as a hybrid system with memory in $\MD$ with the following data:
$$
\F(\psi)=\begin{bmatrix}\phi(-2,k))\\
1
\end{bmatrix},\quad k=\max\set{j:\,(-2,j)\in\dom\psi},\quad \psi\in \MD,
$$
$$
\G(\psi)=\begin{bmatrix} 2\phi(-2,0))\\
0
\end{bmatrix},\quad\psi\in \MD,
$$
$$
\C=\set{\psi=(\phi,\tau)\in\MD:\,\tau(0,0)\subset[0,\delta]}, 
$$
$$
\D=\set{\psi=(\phi,\tau)\in\MD:\,\tau(0,0)=\delta}.
$$

{\color{black}

The following example motivates the need to consider hybrid systems with general flow and jump sets that are subsets of $\M$, compared with earlier work in \cite{liu2012generalized}, where the flow and jump sets are subsets of $\Real^n$.  

\begin{ex}\em \label{ex:ec}
Consider an event-triggered control system \cite{postoyan2015framework} with the plant dynamics given by
\begin{equation}
\dot{x}=f(x,u),
\end{equation}
where $x\in\Real^n$ is the plant state and $u\in\Real^m$ is the control input, for which a stabilizing static state-feedback controller is designed as
\begin{equation}
u = \alpha (x),
\end{equation}
where $\alpha$ is some continuous function from $\Real^n$ to $\Real^m$. Suppose that a zero-order-hold device is used to implement the controller, which is connected to a sensor measuring  the state of the plant through a network. Between updates, the control input is kept constant and, during updates, the control input is set to be equal to $\alpha(x_s)$, where $x_s$ is the last sampled state. The event-triggering law for updating the control input is defined by 
\begin{equation}
|u-\alpha(x_s)|\geq \rho(x_s),
\end{equation}
where $\rho:\Real^n\ra \Real_{\ge 0}$ is a positive definite function. We assume that there is a possibly time-varying sampling delay from the sensor to controller denoted by $h_s$, whereas the input delay from the controller to the plant is denoted by $h_u$. The overall control scheme is illustrated in Fig. \ref{fig:ec}. 

\begin{figure}[ht!]
\centering
    \includegraphics[width=0.5\textwidth]{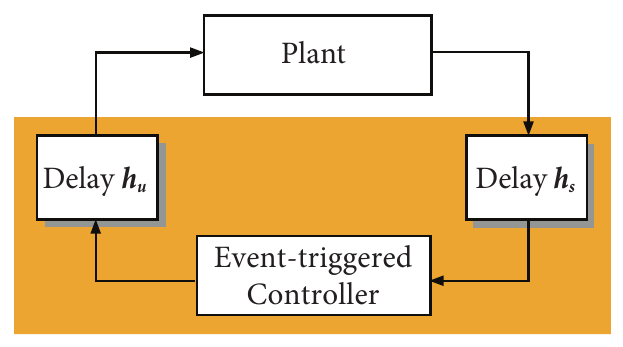}
  \caption{Illustration of an event-triggered control system with delays.}\label{fig:ec}
\end{figure}

Suppose that $h_s+h_u\le h$ for some constant $h>0$. Consider $\Delta=h+1$. Let $\textbf{z}=(\phi,\psi)\in\MD$, where $\phi$ and $\psi$ are $n$-dimensional and $m$-dimensional hybrid arcs, respectively, satisfying $\dom \phi=\dom\psi=\dom \textbf{z}$. From the perspective of the controller, the event-triggered control system above can be modelled as a hybrid system with memory in $\MD$ with the following data:
$$
\F(\textbf{z})=\begin{bmatrix}f(\phi(0,0),\psi(-h_u,k_u))\\
0
\end{bmatrix},\quad k_u=\max\set{k:\,(-h_u,k_u)\in\dom \textbf{z}},\quad \textbf{z}\in \MD,
$$
$$
\G(\textbf{z})=\begin{bmatrix} \phi(0,0))\\
\alpha(\phi(-h_s,k_s))
\end{bmatrix},\quad k_s=\max\set{k:\,(-h_s,k_s)\in\dom \textbf{z}},\quad \textbf{z}\in \MD,
$$
$$
\C=\set{\textbf{z}=(\phi,\psi)\in\MD:\,\abs{\psi(0,0)-\alpha(\phi(-h_s,k_s))}\le \rho(\phi(-h_s,k_s))},
$$
$$
\D=\set{\textbf{z}=(\phi,\psi)\in\MD:\,\abs{\psi(0,0)-\alpha(\phi(-h_s,k_s))}\ge \rho(\phi(-h_s,k_s))}.
$$
\end{ex}

Note that it is necessary to define the flow and jump sets in $\MD$ because of the state-dependent triggering law involving delayed states. 

}

\begin{rem}\em
While it seems more cumbersome to formulate hybrid systems with memory as hybrid inclusions, this general formulation does allow us to investigate asymptotic properties that cannot be conveniently studied using classical notions of solutions, e.g., invariance principles \cite{liu2016invariance} and robust stability (to be discussed later in Section \ref{sec:robustness}). {\color{black} For instance, it is recently demonstrated in \cite{postoyan2015framework} that the hybrid system theory without memory \cite{goebel2012hybrid} provides a unified framework for analyzing the stability of event-trigged control systems without delays. Since delays are inevitable due to the use of networked control, the theory to be developed in this paper, together with the stability analysis results in \cite{liu2016lyapunov,liu2016invariance,liu2016razumikhin}, can play a useful role in analyzing the type of systems described in Example  \ref{ex:ec}. This is the main motivation for this work.}
\end{rem}

}

\subsection{Preliminaries on set-valued analysis}

In this section, we recall a few concepts from set-valued analysis \cite[Chapters 4 \& 5]{rockafellar1998variational} (see also Chapter 5 of \cite{goebel2012hybrid}).

\begin{defn}[Set convergence]\em
Consider a sequence of sets $\set{H_i}_{i=1}^{\infty}$ in $\Real^n$. The \emph{outer limit} of the sequence, denoted by $\limsup_{i\ra\infty} H_i$ is the set of all $x\in\Real^n$ for which there exists a subsequence $x_{i_k}\in S_{i_k}$, $k=1,2,\cdots$, such that $x_{i_k}\ra x$. The \emph{inner limit} of $\set{H_i}_{i=1}^{\infty}$, denoted by $\liminf_{i\ra\infty} H_i$, is the set of all $x\in\Real^n$ for which there exists a sequence $x_{i}\in S_{i}$ such that $x_{i}\ra x$. The \emph{limit} of $\set{H_i}_{i=1}^{\infty}$ exists if $\limsup_{i\ra\infty} H_i=\liminf_{i\ra\infty} H_i$ and it is then given by $\lim_{i\ra\infty} H_i=\limsup_{i\ra\infty} H_i=\liminf_{i\ra\infty} H_i$.
\end{defn}

\begin{defn}[Set-valued mappings]\em
Let $S:\,\Real^m\raa\Real^n$ be a set-valued mapping from $\Real^m$ to $\Real^n$. Its domain, range, and graph are defined, respectively, by 
\begin{align*}
\dom S&:=\set{x:\,S(x)\neq\emptyset},\\
\rge S&:=\set{y:\,\exists\;x\;\text{s.t.}\; y\in S(x)},\\
\gph S&:=\set{(x,y):\,y\in S(x)}.
\end{align*}
\end{defn}

\begin{defn}[Graphical convergence]\em 
A sequence of set-valued mappings $S_i:\,\Real^m\raa\Real^n$ is said to \emph{converge graphically} to some $S:\,\Real^m\raa\Real^n$ if $\lim_{i\ra\infty}\gph S_i=\gph S$. We use $\rap$
to denote graphical convergence.
\end{defn}

While a hybrid arc $\phi$ is a single-valued map on its domain, it can also be seen as a set-valued mapping from $\Real^2$ to $\Real^n$, with its values defined by $\phi(s,k)$ if $(s,k)\in\dom\phi$ and $\emptyset$ otherwise. We say that a sequence of hybrid arcs $\phi_i:\,\dom\phi_i\ra\Real^n$ converges graphically to some set-valued mapping $\phi:\,\Real^2\raa\Real^n$ if $\lim_{i\ra\infty}\gph \phi_i=\gph \phi$. Note that the graphical limit of a sequence of hybrid arcs can be set-valued and in general may not be an hybrid arc.

\subsection{The space $(\MD,\mathbf{d})$}

It is clear that $\MD$ is not a vector space, since different hybrid arcs can have different domains. In this section, we recall from \cite{rockafellar1998variational} a quantity that characterizes the set convergence of closed nonempty sets and use this distance to define a metric on $\MD$. Let $\text{cl-sets}_{\not\equiv\emptyset}(\Real^n)$ denote the collection of all nonempty, closed subsets of $\Real^n$.

Given $\rho\ge 0$, for each pair $A,B\in \text{cl-sets}_{\not\equiv\emptyset}(\Real^n)$, define
$$
\mathbf{d}_\rho(A,B):=\max_{\abs{z}\le \rho}\big\vert d(z,A)- d(z,B)\big\vert.
$$
where $d(z,H)$ for $z\in\Real^{n}$ and $H\subset\Real^{n}$ is defined by $\inf_{w\in H}\abs{w-z}$. Furthermore, define
$$
\mathbf{d}(A,B):=\int_0^\infty \mathbf{d}_\rho(A,B) e^{-\rho}d\rho,
$$
which is called the \emph{(integrated) set distance} between $A$ and $B$. This distance indeed characterizes set convergence of sets in $\text{cl-sets}_{\not\equiv\emptyset}(\Real^n)$ as recalled below.

\begin{thm}\cite[Theorem 4.42]{rockafellar1998variational}\label{thm:distance}
A sequence $S_i\in \text{cl-sets}_{\not\equiv\emptyset}(\Real^n)$ converges to $S$ if and only $\mathbf{d}(S_i,S)\ra 0$. Moreover, the space $(\text{cl-sets}_{\not\equiv\emptyset}(\Real^n),\mathbf{d})$ is a separable, locally compact, and complete metric space.
\end{thm}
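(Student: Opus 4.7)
The plan is to prove the theorem in two main steps: first establish the equivalence between Painlev\'e-Kuratowski set convergence and convergence of $\mathbf{d}(S_i,S)\ra 0$, then verify the four metric-space-theoretic properties in turn. The key intermediate result for the first step is that set convergence $S_i\ra S$ is characterized by uniform convergence of distance functions on bounded sets: $d(\cdot,S_i)\ra d(\cdot,S)$ uniformly on every bounded subset of $\Real^n$. This characterization I would unpack directly from the definitions of the inner and outer limit, leveraging compactness of closed balls in $\Real^n$ and the fact that $d(\cdot,H)$ is $1$-Lipschitz. With this in hand, $S_i\ra S$ becomes equivalent to $\mathbf{d}_\rho(S_i,S)\ra 0$ for every $\rho\ge 0$.

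Next I would pass from pointwise-in-$\rho$ convergence to convergence of the integrated distance. For the forward direction, the Lipschitz property of distance functions gives a linear-in-$\rho$ bound of the form $\mathbf{d}_\rho(A,B)\le 2\rho+d(0,A)+d(0,B)$, which combined with the exponential weight permits an application of dominated convergence to conclude $\mathbf{d}(S_i,S)\ra 0$. For the backward direction, the monotonicity of $\rho\mapsto \mathbf{d}_\rho(A,B)$ combined with the elementary computation $\int_\rho^\infty \mathbf{d}_{\rho'}(A,B)e^{-\rho'}d\rho'\ge \mathbf{d}_\rho(A,B)\,e^{-\rho}$ yields $\mathbf{d}_\rho(S_i,S)\le e^\rho\,\mathbf{d}(S_i,S)$, so $\mathbf{d}(S_i,S)\ra 0$ forces $\mathbf{d}_\rho(S_i,S)\ra 0$ for every fixed $\rho$, closing the equivalence.

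For the second claim, symmetry and the triangle inequality for $\mathbf{d}$ are inherited directly from the corresponding properties of $\mathbf{d}_\rho$, while $\mathbf{d}(A,B)=0$ combined with monotonicity in $\rho$ forces $d(\cdot,A)\equiv d(\cdot,B)$ and hence $A=B$, since closed sets are determined by their distance functions. Separability follows by exhibiting, as a countable dense family, finite unions of points with rational coordinates: for any $S$ and any $\rho$, one intersects $S$ with a large ball and approximates by a fine rational grid, controlling $\mathbf{d}_\rho$ and then $\mathbf{d}$ via the exponential tail. Local compactness and completeness both rest on the Painlev\'e selection theorem---every sequence in $\text{cl-sets}_{\not\equiv\emptyset}(\Real^n)$ admits a set-convergent subsequence---which I would prove by a diagonal argument indexed by the countable family of closed balls of rational radius, simultaneously tracking both the inner and outer limits to identify the candidate limit set. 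Completeness is then routine: apply the selection theorem to a Cauchy sequence and use the Cauchy property together with the triangle inequality to upgrade subsequential convergence to convergence of the entire sequence.

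The main obstacle is the Painlev\'e selection theorem underlying local compactness and completeness. Its diagonal construction must yield a single candidate set $S$ whose inner and outer limits coincide with $S$ along the chosen subsequence; the bookkeeping needed to pin down $S$ as exactly the set of accumulation points of sequences $x_{i_k}\in S_{i_k}$, while ensuring every point of $S$ is approximable in the inner-limit sense, is where the nontrivial work lies.
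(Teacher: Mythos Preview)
The paper does not prove this theorem; it is quoted directly from \cite[Theorem 4.42]{rockafellar1998variational} as a known result, so there is no in-paper argument to compare against. Your proposal is a faithful reconstruction of the standard proof found in that reference and is essentially correct in outline.

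One small point you gloss over: the Painlev\'e selection principle, as usually stated, allows the subsequential set-limit to be empty, so for local compactness and completeness \emph{within} $\text{cl-sets}_{\not\equiv\emptyset}(\Real^n)$ you must additionally argue that the limit is nonempty. This follows because a $\mathbf{d}$-bounded (respectively $\mathbf{d}$-Cauchy) sequence has $d(0,S_i)$ uniformly bounded, via the elementary inequality $|d(0,A)-d(0,B)|=\mathbf{d}_0(A,B)\le \mathbf{d}(A,B)$ obtained from monotonicity of $\rho\mapsto\mathbf{d}_\rho$; hence the sets $S_i$ all meet a common ball and the limit cannot be empty.
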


We apply this distance on graphs of hybrid arcs as follows. Given a hybrid arc $\phi:\,\dom \phi\ra\Real^n$, the graph of $\phi$ is defined by
$
\gph \phi:=\set{(s,k,x):\,x=\phi(s,k)}.
$
Given $\rho\ge 0$, for a pair of hybrid arcs $\phi$ and $\psi$, define
$
\mathbf{d}_\rho(\phi,\psi):=\d_{\rho}(\gph \phi,\gph \psi)
$
and
$
\mathbf{d}(\phi,\psi):=\d(\gph \phi,\gph \psi),
$
which is called the \emph{graphical distance} between hybrid arcs. Note that the same notion of graphical distance applies to both hybrid arcs and hybrid memory arcs.

We now focus on hybrid memory arcs in $\MD$. {\color{black} We first note that the graph of a hybrid memory arc is a nonempty, closed subset. Indeed, it is nonempty because the domain of a hybrid memory arc has at least one point $(0,0)$. It is closed because, by definition, the domain of a hybrid memory arc is the union of a finite number of closed intervals and a hybrid memory arc is continuous in its first argument.} As a consequence of Theorem \ref{thm:distance} above and the fact that the graph of a hybrid memory arc is a nonempty, closed subset of $\Real^{n+2}$, we know that the space $(\MD,\mathbf{d})$ is itself a metric space. However, $(\MD,\d)$ is not complete, since the limit of a sequence of graphically convergent hybrid memory arcs may not be a hybrid memory arc, {\color{black} as shown in the following example.}

{\color{black} \begin{ex}
Consider a sequence of hybrid memory arcs  $\set{\phi_n}_{n=1}^{\infty}$  in $\M^{\Delta}$ (with $\Delta=1$) defined by $\phi_n(s,0)= 0$ for $s\in [-1,-\frac{1}{n}]$ and $\phi_n(s,0) = \frac{ns+1}{1+n}$ for $s\in [-\frac{1}{n},0]$. As shown in Fig. \ref{fig:example}, it is clear that the graphs of these hybrid memory arcs converge to the set 
$$\set{(s,k,x):\,-1\le s\le 0,\,k=0,\,x=0}\cup \set{(s,k,x):\, s=0,\,k=0,\,0\le x\le 1}.$$
\begin{figure}[ht!]
\centering
    \includegraphics[width=0.6\textwidth]{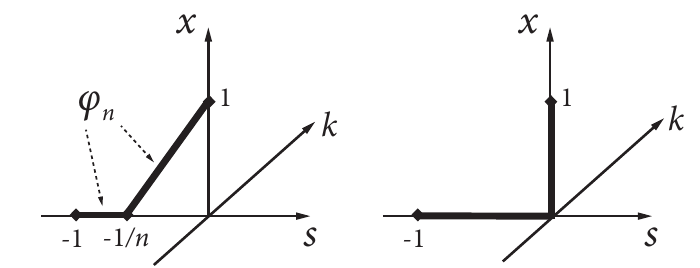}
  \caption{Hybrid memory arcs that graphically converge to a non-hybrid memory arc.}\label{fig:example}
\end{figure}
Note that this is a closed set that does not correspond to the graph of any hybrid memory arcs, because otherwise at the point $(0,0)$, the hybrid arc would not be single-valued. 
\end{ex}}

The following subspace of $(\MD,\d)$ is often used where such compactness properties are needed. Given $b,\lambda\in\Real_{\ge 0}$, define
\begin{align*}
\MD_{b}:&=\Big\{\phi\in\MD:\, \norm{\phi}:=\sup_{(s,k)\in\dom\phi}\abs{\phi(s,k)}\le b\Big\},\\
\MD_{b,\lambda}:&=\Big\{\phi\in\MD_{b}:\, \phi\text{ is }\lambda\text{-Lipschitz}\Big\},
\end{align*}
where $\phi\in\M$ is said to be \emph{$\lambda$-Lipschitz} if
$\abs{\phi(s,k)-\psi(s',k)}\le \lambda\abs{s-s'}$ holds for all $(s,k),\,(s',k)\in\dom\phi$.

\begin{prop}\label{thm:compact}
The space $(\MD_{b,\lambda},\mathbf{d})$ is a 
locally compact complete metric space.
\end{prop}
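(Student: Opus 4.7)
The plan is to prove $\MD_{b,\lambda}$, viewed as a collection of graphs, is a closed subset of $(\text{cl-sets}_{\not\equiv\emptyset}(\Real^{n+2}), \mathbf{d})$ and then invoke Theorem \ref{thm:distance} together with two standard facts: a closed subset of a complete metric space is complete, and a closed subset of a locally compact Hausdorff space is locally compact in the subspace topology. Because $\mathbf{d}$ on $\MD_{b,\lambda}$ is by definition the restriction of the set distance on graphs, the subspace topology coincides with the intrinsic metric topology, so this strategy suffices.

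First I would observe that every $\phi \in \MD_{b,\lambda}$ has its graph contained in the fixed compact set
$$
K_0 := [-\Delta-1, 0] \times \{-\lfloor\Delta+1\rfloor, \ldots, -1, 0\} \times \bar{B}(0,b),
$$
because $s+k \ge -\Delta-1$ combined with $s,k \le 0$ bounds $k$ from below, and $\norm{\phi}\le b$ bounds the third coordinate; in particular the number of levels $K$ is uniformly bounded by $\Delta+2$. Now suppose $\{\phi_n\}\subset \MD_{b,\lambda}$ with $\gph \phi_n \to S$ in the set-convergence sense. Then $S$ is nonempty (it contains a limit point of $(0,0,\phi_n(0,0))$) and $S\subset K_0$. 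Define a candidate $\phi$ by $\phi(s,k):=\{x:(s,k,x)\in S\}$ on the projection of $S$ onto $\Real\times\Z$.

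Single-valuedness of $\phi$ follows from the uniform Lipschitz bound: if $(s,k,x_1),(s,k,x_2)\in S$ with approximating sequences $(s_n^{(i)},k_n^{(i)},x_n^{(i)})\in \gph \phi_n$, then the integer second coordinate forces $k_n^{(1)}=k_n^{(2)}=k$ eventually, and $|x_n^{(1)}-x_n^{(2)}|\le \lambda|s_n^{(1)}-s_n^{(2)}|\to 0$ forces $x_1=x_2$. The same argument shows that $\phi$ is $\lambda$-Lipschitz on each level, while $\norm{\phi}\le b$ follows from $S\subset K_0$. For the domain structure I would pass to a subsequence on which $K_n=K$ is constant and further extract convergent subsequences of the finitely many breakpoints $s_k^{(n)}\to s_k$; the monotonicity $s_K \le \cdots \le s_0 = 0$ passes to the limit, and the two $\MD$ conditions ($s+k\ge -\Delta-1$ for all points of the domain, and existence of $(s',k')\in \dom\phi$ with $s'+k'\le -\Delta$) are closed and are therefore preserved under graphical limits.

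The main obstacle will be this last step: verifying that $\dom\phi$ really equals the candidate domain $\bigcup_{k=1}^K([s_k,s_{k-1}],-k+1)$ extracted via subsequences, and that $\phi$ is continuous on each interval. The uniform Lipschitz bound is indispensable here, since without it the pathological example displayed just before this proposition shows how graphical limits can escape $\MD$ entirely by producing multi-valued fibers at jump points. Once closedness of $\MD_{b,\lambda}$ is established, Theorem \ref{thm:distance} and the standard topological facts quoted above immediately yield completeness and local compactness.
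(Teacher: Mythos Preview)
Your proposal is correct and follows essentially the same approach as the paper: both reduce the claim to showing that $\MD_{b,\lambda}$ is a closed subset of $(\text{cl-sets}_{\not\equiv\emptyset}(\Real^{n+2}),\mathbf{d})$ and then invoke Theorem~\ref{thm:distance}. The paper handles your ``main obstacle'' (identifying the limit domain and proving single-valuedness on each level) by citing \cite[Examples~5.3, 5.19 and Lemma~5.28]{goebel2012hybrid} rather than extracting breakpoint subsequences directly, but the underlying argument is the same; your explicit verification of the second $\MD$ condition (existence of $(s',k')$ with $s'+k'\le -\Delta$) is a detail the paper's proof actually omits.
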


\begin{proof}
{ It suffices to show that $\MD_{b,\lambda}$ is a closed subspace of $(\text{cl-sets}_{\not\equiv\emptyset}(\Real^{n+2}),\mathbf{d})$ under the graphical distance. Consider a sequence $\phi_i\in \MD_{b,\lambda}$ such that $\d(\phi_i,\phi)\ra 0$ as $i\ra\infty$ for some $\phi\in(\text{cl-sets}_{\not\equiv\emptyset}(\Real^{n+2}),\mathbf{d})$. We need to prove that
$\phi \in \MD_{b,\lambda}$. Note that the sequence $\set{\phi_i}_{i=1}^\infty$ is a bounded sequence. 
It follows from the argument in \cite[Examples 5.3 and 5.19]{goebel2012hybrid} that
$
\dom\phi=\lim_{i\ra\infty}\dom\phi_i
$
is a hybrid memory domain. Moreover, since for each $(s,k)\in\dom\phi$, there exist $(s_i,k_i)\in\dom\phi_i$ such that $(s_i,k_i)\ra (s,k)$ as $i\ra \infty$. It follows that $s+k\ge -\Delta-1$, since $s_i+k_i\ge -\Delta-1$ for all $i$.

Now for each $k\in\Z_{\le 0}$, let $I^k=\set{s\in\Real_{\le 0}:\,(s,k)\in\dom\phi}$. Let $I_i^k$ be similarly defined for $\phi_i$. It follows from the very definition of set convergence that $\phi_i(\cdot,k)$ converges graphically to $\phi(\cdot,k)$. Now note that the sequence $\set{\phi_i(\cdot,k)}_{i=1}^\infty$ is $\lambda$-Lipschitz. Suppose $I^k$ is a nonempty set. Following the same argument as in the proof of \cite[Lemma 5.28]{goebel2012hybrid}, one can show that $\phi(\cdot,k)$ is single-valued and $\lambda$-Lipschitz on $I^k$. In addition, $\phi_i(\cdot,k)$ converges uniformly to $\phi(\cdot,k)$ on every compact subset of $\text{int}(I_i^k)$.
This concludes that $\phi\in \MD_{b,\lambda}$.}
\end{proof}

\subsection{Other measures of distance in $\MD$}\label{app:distance}

While the graphical distance $\mathbf{d}$ fully characterizes graphical convergence in $\MD$, in some cases it is not convenient to directly use it. We introduce some other quantities that can be used in company with $\mathbf{d}$ to characterize closeness of two hybrid memory arcs.

\subsection*{Uniform distance}
Given $\phi,\,\psi\in\M$ such that $\dom\phi=\dom\psi$, we can define $$\norm{\phi-\psi}=\sup_{(t,j)\in \dom\phi}\abs{\phi(t,j)-\psi(t,j)},$$ which is called the \emph{uniform distance} between $\phi$ and $\psi$.

\subsection*{$(\tau,\eps)$-closeness}
For hybrid arcs with possibly different domains, a notion called $(\tau,\eps)$-closeness  \cite{goebel2006solutions} can be used to measure their closeness. Here we modify the notion slightly to use it on hybrid memory arcs. Given $\phi,\,\psi\in\M$ and $\tau,\,\eps>0$, $\phi$ and $\psi$ are said to be \emph{$(\tau,\eps)$-close} if
\begin{itemize}
\item[(a)] for all $(t,j)\in\dom\phi$ with $\abs{t+j}\le\tau$, there exists some $s$ such that $(s,j)\in\dom\psi$, $\abs{t-s}\le \eps$, and $\abs{\phi(t,j)-\psi(s,j)}\le\eps$;
\item[(b)] for all $(t,j)\in\dom\psi$ with $\abs{t+j}\le\tau$, there exists some $s$ such that $(s,j)\in\dom\phi$, $\abs{t-s}\le \eps$, and $\abs{\phi(t,j)-\psi(s,j)}\le\eps$.
\end{itemize}
We write $\widetilde{\d}_{\tau}(\phi,\psi)\le \eps$ to indicate that $\phi$ and $\psi$ are $(\tau,\eps)$-close. If $\widetilde{\d}_{\tau}(\phi,\psi)\le \eps$ for all $\tau\ge 0$, $\phi$ and $\psi$ are said to be \emph{$\eps$-close} and we write $\widetilde{\d}(\phi,\psi)\le \eps$.

\subsection*{$(\rho,\eps)$-closeness of graphs}
More generally, we can also use the following to characterize the closeness of the graphs of two hybrid arcs $\phi,\,\psi\in\M$: there exists $\rho>0$ and $\eps>0$ such that $\gph\phi\cap\rho\B\subset\gph\psi+\eps\B$ and $\gph\psi\cap\rho\B\subset\gph\phi+\eps\B$. If the above holds, we say that the \emph{graphs} of $\phi$ and $\psi$ are \emph{$(\rho,\eps)$-close}. We write
$\widehat{\d}_{\rho}(\phi,\psi)\le \eps$
to indicate that the graphs of $\phi$ and $\psi$ are $(\rho,\eps)$-close.

A technical proposition that relates these different measures of distance in $\MD$ is proved in Appendix \ref{append:distance}. 

\subsection{Regularity assumptions on hybrid data of $\HMD$}\label{regfg}

We now introduce a few regularity conditions on the hybrid data of $\HMD=(\C,\F,\D,\G)$, which will allow us to establish certain basic existence and well-posedness results in the next section.

\begin{defn}[Outer semicontinuous]
A set-valued functional $\F:\MD\raa \Real^n$ is said to be \emph{outer semicontinuous} at $\phi\in\MD$ if, for every sequences of hybrid memory arcs $\phi_i\rap \phi$ and $y_i\ra y$ with $y_i\in \F(\phi_i)$, we have $y\in \F(\phi)$.
\end{defn}

\begin{defn}[Local boundedness]
A set-valued functional $\F:\MD\raa \Real^n$ is said to be \emph{locally bounded} at $\phi\in\MD$ if there exists a neighborhood $\U_{\phi}$ of $\phi$ such that the set $\F(\U_{\phi}):=\bigcup_{\psi\in \U_{\phi}} \F(\psi)\subset\Real^n$ is bounded.
\end{defn}

In the above definitions, $\F$ is said to be outer semicontinuous (respectively, locally bounded) \emph{relative} to a set $\M'\subset\MD$, if the mapping $\F\vert_{\M'}$ (defined by $\F\vert_{\M'}(\phi)=\F(\phi)$ if $\phi\in\M'$ and $\F\vert_{\M'}(\phi)=\emptyset$ elsewhere) is outer semicontinuous (respectively, locally bounded) at each $\phi\in \M'$. Finally, the mapping $\F$ is said to be outer semicontinuous (respectively, locally bounded) if it is so relative to its domain.

The following is a list of basic conditions on the data of the hybrid system $\HMD=(\C,\F,\D,\G)$.

\begin{assum}\label{as1}
For every $\bl\ge0$, the following hold:
\begin{enumerate}
\item[(A1)] $\C\cap\Mbl$ and $\D\cap\Mbl$ are closed subsets of $\MD$;
\item[(A2)] $\F$ is outer semicontinuous relative to the set
$\C\cap\Mbl$, locally bounded relative to the set
$\C\cap\M_b^{\Delta}$, and $\F(\phi)$ is nonempty and convex for each $\phi\in\C\cap\Mbl$;
\item[(A3)] $\G$ is outer semicontinuous relative to the set
$\D\cap\Mbl$, locally bounded relative to the set $\D\cap\M_b^{\Delta}$, and $\G(\phi)$ is nonempty for each $\phi\in\D\cap\Mbl$.
\end{enumerate}
\end{assum}

{\color{black} The list of basic conditions (A1)--(A3) in the above assumption are similar to those required for hybrid systems without memory \cite{goebel2012hybrid}. Not only do they provide a set of sufficient conditions for the existence of generalized solutions (Theorem \ref{thm:ext}), they also guarantee that hybrid systems are well-posed (Theorem \ref{thm:well-posedness}). }

The following definition generalizes tangent cones from a set in $\Real^n$ to that of a set in $\MD$ in order to formulate viability conditions in $\MD$. The definition is based on the definition of tangent cones in functional spaces for developing existence theory for functional differential inclusions (see, e.g., Chapter 12 of \cite{aubin1991viability}).

\begin{defn}\em 
For any $\phi\in\mathcal{K}\subset\MD$, we define $\T_{\K}(\phi)\subset\Real^n$ by $v\in \T_{\K}(\phi)$ if and only if, for any $\eps>0$, there exist $h\in (0,\eps]$ and { a linear function} $x_h\in C([0,h],\Real^n)$ such that\\
(1) $x_h(0)=\phi(0,0)$ and
$$
{ \frac{x_h(s)-x_h(0)}{s}\in v+\eps\mathbb{B},\quad \forall s\in(0,h];}
$$
(2) the hybrid memory arc $\psi_{x_h}$ defined by:
\begin{equation}
\psi_{x_h}(s,k)=\left\{\begin{array}{ll}
x_h(h+s),\quad \forall s\in [-h,0],\; k=0,\;s\ge-\Delta,\\
\phi(h+s,k),\quad\forall (h+s,k)\in\dom\phi,\;s+k\ge-\Delta_{\inf},
\end{array}\right.\label{eq:ext}
\end{equation}
lies in $\K$, where $\Delta_{\inf}:=\inf\Big\{\delta\ge\Delta:\,\exists(h+s,k)\in\dom \phi\;\text{s.t.}\;   s+k=-\delta\Big\}.
$
\end{defn}

{\color{black} Now we are ready to state and prove one of the main results of this paper. }

\section{Basic existence}\label{sec:exist}

{\color{black} The following theorem gives an existence result for generalized solutions of hybrid systems with memory. The main proof consists of  constructing a sequence of approximate solutions using a viability condition and proving that the limit of this sequence is a true solution to system. }

\begin{thm}\label{thm:ext}
Let $\HMD=(\C,\F,\D,\G)$ satisfy the conditions (A1)--(A3) in Assumption \ref{as1}. If, for every $\psi\in \C\backslash \D$,
\begin{equation}\label{eq:VC}
\F(\psi)\cap \T_\C(\psi)\neq\emptyset,
\end{equation}
then there exists a nontrivial solution to $\HMD$ from every initial condition $\phi\in{\C\cup \D}$ such that $\phi\in\MD_{b_0,\lambda_0}$ for some $b_0,\lambda_0\in\Real_{\ge 0}$. Moreover, every such maximal solution $x$ satisfies exactly one of the following conditions:
\begin{enumerate}[(a)]
\item $x$ is complete;
\item $\dom_{\ge 0}(x)$ is bounded, the interval $I^J$ has nonempty interior, and $\limsup_{t\ra T^-}\abs{x(t,J)}=\infty$, where 
$$T=\sup_{t}\dom x:=\sup\set{t:\,(t,j)\in\dom x}$$ and 
$$J=\sup_j\dom x:=\sup\set{j:\,(t,j)\in \dom x};$$
\item $\mathcal{A}_{[T,J]}^{\Delta}x\not\in \C\cup \D$, where $(T,J)$ is as defined in (b).
\end{enumerate}
Furthermore, if $\G^+(\D)\subset \C\cup \D$, then (c) above does not occur.
\end{thm}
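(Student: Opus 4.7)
My proof of Theorem \ref{thm:ext} will split into (i) producing a nontrivial solution from each admissible initial condition and (ii) classifying maximal solutions via a dichotomy argument.

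For (i), the case $\phi\in\D$ is immediate: nonemptiness of $\G(\phi)$ in (A3) lets me extend $\phi$ by a single jump, yielding a nontrivial solution. The main work is $\phi\in\C\backslash\D$, for which I would construct approximate solutions by an Euler-type scheme tailored to $\MD$: given $\eps_i\downarrow 0$, iteratively apply the viability condition (\ref{eq:VC}) to select $v_k\in \F(\psi_k)\cap \T_\C(\psi_k)$ and invoke the definition of $\T_\C$ to obtain a step size $h_k\in(0,\eps_i]$ and a linear arc $x_{h_k}$ whose extension $\psi_{x_{h_k}}$ (per (\ref{eq:ext})) lies in $\C$ with slope in $v_k+\eps_i\mathcal{B}$. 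Local boundedness of $\F$ relative to $\C\cap\Mbl$ (A2) together with $\phi\in\MD_{b_0,\lambda_0}$ confines the resulting arcs $x_i$ to some $\MD_{b,\lambda}$ on a common time horizon, so Proposition \ref{thm:compact} yields a graphically convergent subsequence $x_i\rap x$. Closedness of $\C\cap\Mbl$ (A1) then propagates through the graphical limit to give $\Atj x\in\C$, and outer semicontinuity of $\F$ combined with the slope estimates yields $\dot{x}(t,j)\in\F(\Atj x)$ almost everywhere. A standard Zorn's-lemma argument extends any such nontrivial solution to a maximal one.

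For (ii), let $x$ be maximal and set $(T,J):=(\sup_t\dom x,\sup_j\dom x)$. If $T=\infty$ we are in (a). Otherwise, if $I^J$ has nonempty interior and $\limsup_{t\ra T^-}|x(t,J)|=\infty$, we are in (b). In the remaining subcase $x$ is bounded near $(T,J)$, so $\phi_T:=\AD_{[T,J]}x$ exists in $\MD$ by continuity. If $\phi_T\in\C\cup\D$, I could extend $x$ either by flow (via viability when $\phi_T\in\C\backslash\D$) or by jump (via nonempty $\G$ when $\phi_T\in\D$), contradicting maximality; hence $\phi_T\notin \C\cup\D$, which is exactly (c). To rule out (c) under the hypothesis $\G^+(\D)\subset\C\cup\D$, I split on the last action leading to $(T,J)$: a terminating jump forces $\phi_T\in\G^+(\D)\subset\C\cup\D$ directly, while a terminating flow, combined with boundedness and closedness of $\C\cap\Mbl$ (A1), forces $\phi_T\in\C$ as a graphical limit of $\AD_{[t,J]}x\in\C$ for $t\uparrow T$.

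The main obstacle is the limit passage in (i): the ``state'' at each Euler step is an entire memory arc rather than a point in $\Real^n$, so the approximants live in the non-vector metric space $(\MD_{b,\lambda},\d)$. Ensuring that $\Atj x_i\rap \Atj x$ in a form compatible with the outer-semicontinuity definition of $\F$ requires combining the uniform Lipschitz bound with Proposition \ref{thm:compact} and the auxiliary closeness notions of Section 2.4. A related subtlety is that the effective memory window $\Delta_{\inf}$ may vary along the sequence, which is exactly why $\MD$ is defined with a relaxed window size (cf.\ Remark \ref{rem:size}); this relaxation must be exploited carefully to keep the limit arc in $\MD$ and to transfer viability cleanly through the limit.
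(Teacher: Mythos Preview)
Your overall strategy matches the paper's, but there is a genuine gap in part~(i). In your Euler scheme you apply the viability condition directly at the current memory arc $\psi_k$ to obtain a step $h_k\in(0,\eps_i]$; however, nothing prevents $h_k\to 0$, so $\sum_k h_k$ may accumulate strictly below any target time and the scheme need not produce an approximant on a fixed horizon. The paper handles this with a finite-cover argument that you have not invoked at the construction stage: one first fixes the compact set $\ms=\{\psi\in\C\cap\Mbl:\abs{\psi(0,0)-\phi(0,0)}\le a\}$, for each $\psi\in\ms$ pre-selects $(h_\psi,v_\psi,y_\psi)$ from the tangent-cone definition, covers $\ms$ by finitely many $(1/\eps,h_{\psi_i}\eps)$-closeness neighbourhoods $E(\psi_i)$ (Claim~A), and at each step uses the $h_{\psi_i}$ of whichever cover element contains the current memory arc. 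With only finitely many step sizes in play there is a uniform positive lower bound, so the scheme reaches time $a/[\lambda+(1+\lambda)\eps]$ in finitely many steps (Claim~B). A side effect is that consecutive pieces are not concatenated exactly but only up to $(1/\eps,h_k\eps)$-closeness, which is why the paper's approximate solution $y_\eps$ carries correction terms and requires the intermediate closeness estimates of Claim~C before one can compare $\AD_{[0,0]}y_{i+1}$ with $\AD_{[t,0]}X$.

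A second, smaller gap concerns the derivative limit: ``outer semicontinuity of $\F$ combined with the slope estimates'' does not by itself give $\dot X(t,0)\in\F(\AD_{[t,0]}X)$, because uniform convergence $X_n\to X$ gives no pointwise control on $\dot X_n$. The paper first establishes $\dot X_n(t,0)\in\F(\AD_{[t,0]}X)+\eta\mathbb{B}$ for large $n$ (Claim~E), observes that $\dot X_n\to\dot X$ only weakly in $L^1$, applies Mazur's theorem to pass to convex combinations converging strongly (hence a.e.\ along a subsequence), and then uses the \emph{convexity} of $\F(\cdot)$ in (A2) to conclude; this is precisely where that convexity hypothesis enters. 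Your classification~(ii) and the argument ruling out (c) are fine and agree with the paper; in fact your ``terminating flow'' branch already forces $\phi_T\in\C$ by closedness regardless of the extra hypothesis, so only the jump branch actually needs $\G^+(\D)\subset\C\cup\D$.
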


\begin{proof}
\emph{\bfseries (Local existence)} If $\phi\in \D$, then the hybrid arc $x$ with $\AD_{[0,0]}x=\phi$ and $x(0,1)=g$ with any $g\in \G(\phi)$ provides a desired solution. Otherwise, $\phi\in \C\backslash \D$ and the viability condition (\ref{eq:VC}) is satisfied at $\phi$. Given any $a>0$, define
$$
\ms:=\set{\psi\in\C\cap \M_{b,\lambda}^{\Delta}:\;\abs{\psi(0,0)-\phi(0,0)}\le a},
$$
where $b:=b_0+a$ and $\lambda>\max(1,\lambda_0)$ is such that $\F(\psi)\subset(\lambda-1)\mathbb{B}$ for all $\psi\in\ms$.
This existence of such a $\lambda$ follows from the face that $\F$ is locally bounded and $\ms$ is a compact set in $(\MD,\d)$.

Fix any $\eps\in (0,1)$. For each $\psi\in \ms$, by the viability condition (\ref{eq:VC}), there exists $v_{\psi}\in \F(\psi)$, $h_{\psi}\in (0,\eps]$, and a hybrid arc $y_\psi$ defined on $\dom\psi\cup [0,h]\times\set{0}$ such that $\AD_{[0,0]}y_\psi=\psi$, $\AD_{[h_{\psi},0]}y_\psi\in\C$, and
$$
\frac{y_\psi(s,0)-y_\psi(0,0)}{s}\in v_{\psi}+\eps\mathbb{B},\quad\forall s\in (0,h_{\psi}].
$$
{ It follows from $v_{\psi}+\eps\mathbb{B}\subset (\lambda-1+\eps)\B\subset{\lambda}\B$ that $y_\psi$ is $\lambda$-Lipschitz.}\\
\emph{\bfseries Claim A:} Define
$$
E(\psi):=\Big\{\xi\in\ms:\,\xi\text{ and }\psi\text{ are }(1/\eps,h_{\psi}\eps)\text{-close}\Big\},
$$
where the definition for $(1/\eps,h_{\psi}\eps)\text{-closeness}$ is given in Section \ref{app:distance}. There exists a finite index set $I$ and $\set{\psi_i\in\ms}_{i\in I}$ such that
\begin{equation}\label{eq:epsnet}
\ms\subset\bigcup_{i\in I}E(\psi_i).
\end{equation}
\emph{\bfseries Proof of Claim A:} By Proposition
\ref{dprop}(c) in Section \ref{app:distance}, if $\phi\in\M_S$ is such that $\d(\phi,\psi)<\delta$, with $\delta e^{\rho}<h_\psi\eps<1$, $\rho=\sqrt{b^2+\tau^2}$, and $\tau>1/\eps$, then we know $\phi$ and $\psi$ are $(1/\eps,h_{\psi}\eps)$-close and hence $\phi\in E(\psi)$. This shows $\set{\phi\in\M_S:\,\d(\phi,\psi)<\delta}\subset E(\psi)$. By compactness of $\M_S$ guaranteed by Theorem \ref{thm:compact}, the conclusion of the claim holds. $\blacksquare$

\emph{\bfseries Claim B:} Given $a$ and $\lambda$, for each $\eps\in (0,1)$, there exists positive numbers $\set{h_k}_{k=1}^p$, real vectors $\set{v_k}_{k=1}^p$,  and hybrid arcs $\set{y_k}_{k=1}^p$ such that $\sum_{k=1}^{p-1}h_k\le\frac{a}{\lambda+(1+\lambda)\eps}<\sum_{k=1}^{p}h_k$ and
\begin{equation}
\left\{
\begin{array}{@{}ll}
&\AD_{[0,0]}y_k\in \ms,\quad \AD_{[h_k,0]}y_k\in \C,\quad v_k\in\F(\AD_{[0,0]}y_k),\\[2\jot]
&{\displaystyle\frac{y_k(h_k,0)-y_k(0,0)}{h_k}\in v_{k}+\eps\mathbb{B}},\\
&\text{$\AD_{[h_{k-1},0]}y_{k-1}$ and $\AD_{[0,0]}y_k$ are $(1/\eps,h_k\eps$)-close,}
\end{array}
\right.\label{eq:const}
\end{equation}
holds for all $k=1,\cdots,p$, where $y_0=\phi$, $h_0=0$, and the domain of each $y_k$, $k=1,\cdots,p$, contains
$[0,h_k]\times\set{0}$. \\
\emph{\bfseries Proof of Claim B:} Clearly, $y_0=\phi\in \ms$. By (\ref{eq:epsnet}) and the argument that precedes it, there exists $i_1\in I$ such that $y_0\in E(\phi_{i_1})$. Denote $h_1=h_{\phi_{i_1}}$, $y_1=y_{\phi_{i_1}}$, and $v_1=v_{\phi_{i_1}}$. Note that $h_0=0$, $\AD_{[0,0]}y_1=\phi_{i_1}$, $\AD_{[h_0,0]}y_0=y_0=\phi$, and hence $\AD_{[h_{0},0]}y_{0}=\phi$ and $\AD_{[0,0]}y_1=\phi_{i_1}$ are $(1/\eps,h_1\eps)$-close. The rest of (\ref{eq:const}) for $k=1$ follows from the argument that precedes Claim A. Consider
\begin{align*}
\abs{y_1(s,0)-\phi(0,0)}&\le \abs{y_1(s,0)-y_1(0,0)}+\abs{y_1(0,0)-\phi(0,0)}\\
&\le \lambda s +(1+\lambda)h_1\eps \le \lambda+(1+\lambda)\eps]h_1,\quad\forall s\in [0,h_1],
\end{align*}
where $$\abs{y_1(s,0)-y_1(0,0)}\le \lambda s$$ follows from the fact that $y_1$ is $\lambda$-Lipschitz and 
$$\abs{y_1(0,0)-\phi(0,0)}<(1+\lambda)h_1\eps$$ follows from the fact that $\phi$ and $\AD_{[0,0]}y_1$ are $(1/\eps,h_1\eps)$-close and $\lambda$-Lipschitz. If $[\lambda+(1+\lambda)e^{b}\eps]h_1>a$, we stop. Otherwise, we have $y_1(s,0)\in \phi(0,0)+a\mathbb{B}$ for all $s\in [0,h_1]$. This in turn implies that $\norm{\AD_{[h_1,0]}y_1}\le b$. It further follows that $\AD_{[h_1,0]}y_1\in\ms$. Then there exists $\phi_{i_2}$ such that $\AD_{[h_1,0]}y_1\in E(\phi_{i_2})$. Denote $h_2=h_{\phi_{i_2}}$, $y_2=y_{\phi_{i_2}}$, and $v_2=v_{\phi_{i_2}}$. It can be verified that $h_2$, $y_2$, and $v_2$ satisfy (\ref{eq:const}). Moreover,
\begin{align*}
\abs{y_2(h_2,0)-\phi(0,0)}&\le \abs{y_2(h_2,0)-y_1(h_1,0)}+\abs{y_1(h_1,0)-\phi(0,0)},
\end{align*}
where $\abs{y_1(h_1,0)-\phi(0,0)}\le [\lambda+(1+\lambda)\eps]h_1$ is shown earlier and
\begin{align*}
\abs{y_2(h_2,0)-y_1(h_1,0)}&\le \abs{y_2(h_2,0)-y_2(0,0)}+\abs{y_2(0,0)-y_1(h_1,0)}\\
&\le \lambda h_2 + (1+\lambda)h_2\eps,
\end{align*}
where $\abs{y_2(h_2,0)-y_2(0,0)}\le \lambda h_2$ follows from the fact that $y_2$ is $\lambda$-Lipschitz and $\abs{y_2(0,0)-y_1(h_1,0)}<(1+\lambda)h_2\eps$ follows from the fact that $\AD_{[h_1,0]}y_1$ and $\AD_{[0,0]}y_2$ are $(1/\eps,h_2\eps)$-close and $\lambda$-Lipschitz.. The above combined gives
\begin{align*}
\abs{y_2(h_2,0)-\phi(0,0)}&\le[\lambda+(1+\lambda)\eps](h_1+h_2).
\end{align*}
We stop if $[\lambda+(1+\lambda)\eps](h_1+h_2)>a$. Otherwise, $\AD_{[h_2,0]}y_2\in\ms$, and we can continue the above procedure until we find a finite number of $h_{k}\in \set{h_{\phi_i}:\,i\in I}$, $k=1,\cdots,p$, such that $\sum_{k=1}^{p-1}h_k\le\frac{a}{\lambda+(1+\lambda)\eps}<\sum_{k=1}^{p}h_k$. The claim is proved. $\blacksquare$\\
\emph{\bfseries Construction of Approximate Solutions:}
Define a hybrid arc $y_\eps$ by $\AD_{[0,0]}y_\eps=y_0=\phi$ and
\begin{align*}
y_\eps(s,0)=y_{i+1}(s-\sum_{k=0}^{i}h_k,0)+\sum_{k=0}^i[y_k(h_k,0)-y_{k+1}(0,0)],
\end{align*}
if
\begin{align*}
s\in [\sum_{k=0}^{i}h_k,\sum_{k=0}^{i+1}h_k],\quad i\in\set{0,\cdots,p-1}.
\end{align*}
We further define a hybrid arc $x_\eps$ by $\AD_{[0,0]}x_\eps=\AD_{[0,0]}y_\eps=y_0=\phi$ and
\begin{align*}
x_\eps(s,0)=\frac{s-\sum_{k=0}^{i}h_k}{h_{i+1}}
\Big[y_\eps(\sum_{k=0}^{i+1}h_k,0)-y_\eps(\sum_{k=0}^{i}h_k,0)\Big]
+y_\eps(\sum_{k=0}^{i}h_k,0),
\end{align*}
if
\begin{align*}
s\in [\sum_{k=0}^{i}h_k,\sum_{k=0}^{i+1}h_k],\quad i\in\set{0,\cdots,p-1}.
\end{align*}
Note that $\dom x_\eps=\dom y_\eps=\dom\phi \cup [0,\sum_{k=0}^{p}h_k]\times\set{0}$. We can check that both $x_\eps$ and $y_\eps$ are $\lambda$-Lipschitz and $\norm{y_\eps-x_\eps}<\lambda\eps$. Moreover,
\begin{align}
\dot{x}_\eps(s,0)&=\frac{x_\eps(\sum_{k=0}^{i+1}h_k,0)-x_\eps(\sum_{k=0}^{i}h_k,0)}{h_{i+1}}\notag\\
&=\frac{y_\eps(\sum_{k=0}^{i+1}h_k,0)-y_\eps(\sum_{k=0}^{i}h_k,0)}{h_{i+1}}\notag\\
&=\frac{y_{i+1}(h_{i+1},0)-y_{i+1}(0,0)}{h_{i+1}}\in v_{i+1}+\eps\B,\label{eq:dev}
\end{align}
with $v_{i+1}\in\F(\AD_{[0,0]}y_{i+1})$, for all
\begin{align*}
s\in [\sum_{k=0}^{i}h_k,\sum_{k=0}^{i+1}h_k],\quad i\in\set{0,\cdots,p-1}.
\end{align*}

\textbf{\emph{Claim C:}} Suppose $\eps>0$ is sufficiently small such that $1/\eps>(1+\lambda)\sum_{k=0}^{p}h_k\eps$. The following holds for all $i\in\set{0,\cdots,p-1}$:
\begin{align}\label{eq:xeps}
\text
{$\AD_{[\sum_{k=0}^{i+1}h_k,0]}x_\eps$ and $\AD_{[h_{i+1},0]}y_{i+1}$
are $(1/\eps-(1+\lambda)\sum_{k=0}^{i+1}h_k\eps,(1+\lambda)\sum_{k=0}^{i+1}h_k+\lambda\big]\eps)$-close}.
\end{align}

\noindent\emph{\bfseries Proof of Claim C:} We note that (\ref{eq:xeps}) will follow from
\begin{align}\label{eq:xeps2}
\text
{$\AD_{[\sum_{k=0}^{i+1}h_k,0]}y_\eps$ and $\AD_{[h_{i+1},0]}y_{i+1}$
are $(1/\eps-(1+\lambda)\sum_{k=0}^{i+1}h_k\eps,(1+\lambda)\sum_{k=0}^{i+1}h_k\eps)$-close}.
\end{align}
since $\norm{y_\eps-x_\eps}<\lambda\eps$. We prove (\ref{eq:xeps2}) by induction on $i$. Starting with $i=0$, note that
\begin{equation}
\label{eq:est11}
\abs{y_\eps(s,0)-y_1(s,0)}\le \abs{y_1(0,0)-\phi(0,0)}\le (1+\lambda)h_1\eps,\quad\forall s\in [0,h_1].
\end{equation}
We know that $\AD_{[0,0]}y_\eps=\phi$ and $\AD_{[0,0]}y_{1}=\phi_{i_1}$ are already $(1/\eps,h_1\eps)$-close.
It follows that $\AD_{[h_1,0]}y_\eps$ and $\AD_{[h_1,0]}y_{1}$ are $(1/\eps,(1+\lambda)h_1\eps)$-close. Thus, we have proved (\ref{eq:xeps2}) for $i=0$. Now suppose that (\ref{eq:xeps2}) holds for some $i$. Since $\AD_{[h_{i+1},0]}y_{i+1}$ and $\AD_{[0,0]}y_{i+2}$ are $(1/\eps,h_{i+2}\eps)$-close. It follows from Lemma \ref{lem:tri} that $\AD_{[0,0]}y_{i+2}$ and $\AD_{[\sum_{k=0}^{i+1}h_k,0]}y_\eps$ are $(\tau,(1+\lambda)\sum_{k=0}^{i+1}h_k\eps+h_{i+2}\eps))$-close with $\tau=\min(1/\eps-(1+\lambda)\sum_{k=0}^{i+1}h_k\eps-h_{i+2}\eps,1/\eps-(1+\lambda)\sum_{k=0}^{i+1}h_k\eps)$. This implies $\AD_{[0,0]}y_{i+2}$ and $\AD_{[\sum_{k=0}^{i+1}h_k,0]}y_\eps$ are $(1/\eps-(1+\lambda)\sum_{k=0}^{i+2}h_k\eps,(1+\lambda)\sum_{k=0}^{i+2}h_k\eps)$-close. In addition, note that for all $s\in[\sum_{k=0}^{i+1}h_k,\sum_{k=0}^{i+2}h_k]$, we have
$$
\abs{y_\eps(s,0)-y_2(s-\sum_{k=0}^{i+1}h_k,0)}\le \sum_{k=0}^{i+1} \abs{y_k(h_k,0)-y_{k+1}(0,0)}\le (1+\lambda)\sum_{k=0}^{i+2}h_k\eps.
$$
It follows that (\ref{eq:xeps2}) holds for $i+1$. The claim is proved. $\blacksquare$

\noindent\emph{\bfseries Convergence to a True Solution:}
Given any $T_0<\frac{a}{\lambda}$, choose a strictly decreasing sequence $\set{\eps_n}_{n=1}^{\infty}$ such that $T_0<\frac{a}{\lambda+(1+\lambda)\eps_1}$ and $\eps_n\ra 0$ as $n\ra\infty$. The sequence of hybrid arcs $X_n:=x_{\eps_n},$
$n=1,2,\cdots$, are defined on $\dom\phi\cup([0,T_0]\times\set{0})$ and satisfy $\AD_{[0,0]}X_{n}=\phi$ for all $n$. Moreover, each $X_{n}(\cdot,0)$ is $\lambda$-Lipschitz on $[0,T_0]$. By Ascoli's theorem, there exists a subsequence of $X_{n}(\cdot,0)$ (still denoted by $X_{n}$) that converges uniformly to a function $Y$ on $[0,T_0]$. We can define a hybrid arc $X$ with domain $\dom\phi\cup[0,T_0]\times\set{0}$ and $\AD_{[0,0]}X=\phi$. Moreover, $X(\cdot,0)$ is also $\lambda$-Lipschitz on $[0,T_0]$ and hence $\dot{X}(\cdot,0)$ exists almost everywhere on $[0,T_0]$ and $\dot{X}(\cdot,0)\in L^{\infty}([0,T_0],\Real^n)$.

For each $\eps_n$, let $\set{y_i}_{i=0}^{p_n}$ still denote the associated sequence constructed on the series of intervals $\big\{[\sum_{k=0}^{i}h_k,\sum_{k=0}^{i+1}h_k]\big\}_{i=0}^{p_n-1}$. Fix any $\rho>0$.

\noindent\emph{\bfseries Claim D:} There exists $N_1>0$ such that
\begin{equation}\label{estrho1}
\d\Big(\AD_{[\sum_{k=0}^{i}h_k,0]}X_{n},\AD_{[0,0]}y_{i+1}\Big)<\rho/3
\end{equation}
for all $n>N$ and all $i\in \set{0,\cdots,p_n}$.

\noindent\emph{\bfseries Proof of Claim D:} It follows from Claim C that
$$
\dt_{\tau}\Big(\AD_{[\sum_{k=0}^{i}h_k,0]}X_{n},\AD_{[h_{i},0]}y_{i}\Big)
\le \big[(1+\lambda)\sum_{k=0}^{i}h_k+\lambda\big]\eps_n<\rho/3,
$$
with $\tau=1/\eps_n-(1+\lambda)\sum_{k=0}^{i}h_k\eps_n$. Moreover, by how we construct $\set{y_i}_{i=0}^{p}$, we have
$$\dt_{1/\eps_n}(\AD_{[0,0]}y_{i+1},\AD_{[h_i,0]}y_{i})\le h_{i+1}\eps_n.$$
It follows from Lemma \ref{lem:tri} that
\begin{align}
\dt_{1/\eps_n-\bar{h}\eps_n}\Big(\AD_{[\sum_{k=0}^{i}h_k,0]}X_{n},\AD_{[0,0]}y_{i+1}\Big)
&\le\bar{h}\eps_n,\label{eq:est1}
\end{align}
where $\bar{h}$ is a constant that upper bounds $(1+\lambda)\sum_{k=0}^{i+1}h_k+\lambda$ for all $i$. The conclusion of the claim follows from  Proposition \ref{dprop}(c), as we can choose $N_1$ sufficiently large (hence $\tau$ and $\bar{\rho}$ there sufficiently large) such that the right-hand side of (\ref{taueps}) is less than $\rho/3$. $\blacksquare$

As $X_{n}(\cdot,0)$ converges uniformly to a function $X(\cdot,0)$ on $[0,T_0]$ and $\AD_{[0,0]}X_n=\AD_{[0,0]}X_n=\phi$, it follows from Proposition \ref{dprop}(a) that there exists $N_2>0$ such that
\begin{equation}\label{estrho2}
\d\Big(\AD_{[\sum_{k=0}^{i}h_k,0]}X_{n},\AD_{[\sum_{k=0}^{i}h_k,0]}X\Big)<\frac{\rho}{3}
\end{equation}
holds for all $n>N_2$ and all $i\in \set{0,\cdots,p_n}$ such that $\sum_{k=0}^{i}h_k\le T_0$.

Lemma \ref{lem:cont} shows that $\AD_{[t,0]}X$ is uniformly continuous in $t$ on $[0,T_0]$. Let $\delta>0$ be such that $t',t''\in [0,T_0]$ and $\abs{t'-t''}<\delta$ imply
\begin{equation}\label{estrho3}
\d(\AD_{[t',0]}X,\AD_{[t'',0]}X)<\frac{\rho}{3}.
\end{equation}

Fix $t\in (0,T_0)$. Choose $N_3>0$ sufficiently large such that $t\in[\sum_{k=0}^{i}h_k,\sum_{k=0}^{i+1}h_k]\subset (0,T_0)$ and $h_{i+1}\le\eps_n<\delta$ hold for all $n>N_3$.

Let $N=\max_{1\le i\le 3} N_i$. Combing (\ref{estrho1}), (\ref{estrho2}), and (\ref{estrho3}) gives
\begin{align}\label{estrho4}
\d\Big(\AD_{[0,0]}y_{i+1},\AD_{[t,0]}X\Big)<\rho,\quad \forall n>N.
\end{align}

To show that $X$ is a solution to $\H_\M$, we have to show that
\begin{equation}\label{eq:final}
\dot{X}(t,0)\in\F(\AD_{[t,0]}X),\quad \text{for almost all}\quad t\in (0,T_0).
\end{equation}

We first prove the following.

\noindent\emph{\bfseries Claim E:} Given any $\eta>0$, for each $t\in [0,T_0]$, there exists $N'$ sufficiently large such that
\begin{align}\label{esteta}
\dot{X}_{n}(t,0)\in \F(\AD_{[t,0]}X) +\eta\B,
\end{align}
holds for all $n\ge N'$.

\noindent\emph{\bfseries Proof of Claim E:} Choose $\rho>0$ such that $\d(\psi,\AD_{[t,0]X})\le\rho$ implies $\F(\psi)\subset\F(\AD_{[t,0]}X)+\frac{\eta}{2}\B$. Let $N$ be chosen such that (\ref{estrho4}) holds and hence
\begin{align}\label{esteta0}
\F(\AD_{[0,0]}y_{i+1})\subset \F(\AD_{[t,0]}X)+\frac{\eta}{2}\B,\quad \forall n>N.
\end{align}
Note that $\rho$ may depend on $t$ and hence $N$ chosen above may also depend on $t$. Furthermore, choose $N'>N$ sufficiently large such that $n>N'$ implies $\eps_n<\frac{\eta}{2}$. The conclusion of the claim now follows from (\ref{eq:dev}) and (\ref{esteta}). $\blacksquare$

Note that for all $\underline{t},\,\bar{t}\in[0,T_0]$ we have
$
\int_{\underline{t}}^{\bar{t}}\dot{X}_n(s,0)ds=X_n(\bar{t},0)-X_n(\underline{t},0),
$
which converges to
$
X(\bar{t},0)-X(\underline{t},0)=\int_{\underline{t}}^{\bar{t}}\dot{X}(s,0)ds
$
as $n\ra\infty$. Since the derivatives $\dot{X}_n(t,0)$ are equibounded on $[0,T_0]$, we conclude from $L^{\infty}([0,T_0],\Real^n)\subset L^{1}([0,T_0],\Real^n)$ that the sequence $\dot{X}_n(t,0)$ converters weakly to $\dot{X}(t,0)$ in $L^{1}([0,T_0],\Real^n)$. Using Mazur's convexity theorem \cite{yosida1995functional}, we can construct a sequence
\begin{equation}\label{mazur}
Z^l(t)=\sum_{n=l}^{N(l)}\alpha^n_{l}\dot{X}_n(t,0),\quad t\in [0,T_0],\quad \alpha^n_{l}\ge 0,\quad \sum_{n=l}^{N(l)}\alpha^n_{l}=1
\end{equation}
such that $Z^l$ converges strongly to $\dot{X}(\cdot,0)$ in $L^{1}([0,T_0],\Real^n)$ as $l\ra \infty$. Then we can extract a subsequence of $Z^l$ (still denoted by $Z^l$) that converges to $\dot{X}(\cdot,0)$ pointwise for almost all $t\in(0,T_0)$. From (\ref{esteta}) and (\ref{mazur}) and that $\F$ is convex valued, we conclude that for large enough $l$ that $Z^l(t)\in \F(\AD_{[t,0]}X)+\eta\B$. Since $\F(\AD_{[t,0]}X)+\eta\B$ is closed, taking the limit as $l\ra\infty$ implies that $\dot{X}(\cdot,0)\in \F(\AD_{[t,0]}X)+\eta\B$, which holds for almost all $t\in(0,T_0)$. Since $\eta>0$ is arbitrary, we actually proved (\ref{eq:final}).

Finally, we prove that $\AD_{[t,0]}X\in \C$ for all $[0,T_0]$. By (\ref{eq:const}) of Claim B, we have $\AD_{[0,0]y_i}\in \M_S\subset\C$ for all $i\in\set{0,\cdots,p_n}$. For each $t\in(0,T_0)$, it follows from (\ref{estrho4}) that there exists a sequence $\phi_n\in \M_S$ such that $\d(\phi_n,\AD_{[t,0]}X)\ra 0$ as $n\ra \in\infty$. Since $\M_S$ is a closed set in the graphical convergence topology, we know $\AD_{[t,0]}X\in\M_S$ for all $t\in(0,T_0)$. By continuity of $\AD_{[t,0]}X$ in $t$ on $[0,T_0]$ (Lemma \ref{lem:cont}), $\AD_{[t,0]}X\in\M_S$ for all $t\in(0,T_0)$.

{\color{black} \emph{\bfseries Verifying (a)--(c):} The argument is similar to that in the proof of Proposition 2.10 in \cite{goebel2012hybrid}. Suppose that case (a) is not true and $x$ is a maximal solution that is not complete, i.e., $\dom_{\ge 0}(x)$ is bounded. If $(T,J)\in\dom(x)$ and $\AD_{[T,J]}x\in \C\cup \D$, using $\AD_{[T,J]}x$ as initial data, the local existence result we have just proved would allow us to extend the solution either by a jump or a flow. This would contradict the maximality of $x$. Therefore, either $\AD_{[T,J]}x\not\in \C\cup \D$, which verifies (c), or $(T,J)\not\in\dom(x)$. In the latter case, $I^J$ would have nonempty interior. Suppose $I^J=[\alpha,T)$. If $\limsup_{t\ra T^-}\abs{x(t,J)}=\infty$ does not hold, then again the solution can be extended beyond $[\alpha,T) $ by a flow.}
\end{proof}

\section{Well-posedness}
\label{sec:well-posed}

In order to discuss robustness of stability, we define perturbations of a hybrid system with memory as follows. The definition presented here follows closely the notion of outer perturbations of a hybrid system without memory \cite{goebel2012hybrid}, but formulated in a more restricted sense by making the following  assumption on $\HMD$: there exists a nondecreasing function $\lambda:\,\Real_{\ge 0}\ra\Real_{\ge 0}$ such that
{ $$
\C\cup\D\subset\wMl:=\bigcup_{b\in\Real_{\ge 0}}\MD_{b,\lambda(b)}.
$$}
{\color{black} We define perturbations of hybrid data within this set $\wMl$ as follows.}

\begin{defn}

Given a hybrid system with memory $\HMD=(\C,\F,\D,\G)$ and a functional $\rho:\,\MD\ra \Real_{\ge 0}$, the $\rho$-perturbation of $\HMD$ with a function $\lambda$ given in the above assumption, denoted by $(\HMD)_{\rho}$, is the hybrid system with data:
\begin{itemize}
\item $\C_\rho=\set{\phi\in\wMl:\,\wB(\phi,\rho(\phi))\cap \C\neq\emptyset}$;
\item $\F_\rho(\phi)=\overline{\text{con}}\F(\wB(\phi,\rho(\phi))\cap \C)+\rho(\phi)\mathbb{B}$;
\item $\D_\rho=\set{\phi\in\wMl:\,\wB(\phi,\rho(\phi))\cap \D\neq\emptyset}$;
\item $\G_\rho(\phi)= \set{y\in\Real^n:\,y\in v + \rho(\A_{[0,1]}^{\Delta}\phi^+_v)\mathbb{B},\,v\in \G(\wB(\phi,\rho(\phi))\cap \D)}$, where $\phi^+_v$ (the hybrid arc after one jump from $\phi$ with value $v$) is defined by $\gph \phi^+_v := \gph\phi\cup\set{(0,1,v)}$ and the operator $\A$ is as defined in (\ref{eq:opA}). 
\end{itemize}
where $\overline{\text{con}}(E)$ is the closed convex hull of a set $E\subset\Real^n$ and
$$\wB(\phi,\rho(\phi)):=\set{\psi\in\wMl:\,\psi \;\text{and}\; \phi\;\text{are $\rho(\phi)$-close}},$$
where $\rho(\phi)$-closeness is defined in Section \ref{app:distance}, and $\mathbb{B}$ is the closed unit ball in $\Real^n$.
\end{defn}

\begin{prop}
Let $\rho$ be a continuous functional $\rho:\,\MD\ra \Real_{\ge 0}$ and $\HMD=(\C,\F,\D,\G)$ be a hybrid system satisfying Assumption \ref{as1}. Then $(\HMD)_\rho=(\C_\rho,\F_\rho,\D_{\rho},\G_{\rho})$ satisfy Assumption \ref{as1}.
\end{prop}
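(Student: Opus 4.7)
The plan is to verify each of (A1)--(A3) for $(\HMD)_\rho=(\C_\rho,\F_\rho,\D_\rho,\G_\rho)$ in turn, reducing each property to the corresponding one for the nominal data via continuity of $\rho$, the assumption $\C\cup\D\subset\wMl$, local compactness of $(\MD_{b,\lambda},\d)$ from Proposition \ref{thm:compact}, and the equivalences between $\d$-, $\dt$-, and graph-closeness collected in Section \ref{app:distance}. The key preliminary observation I will rely on throughout is that graphical $\rho(\phi)$-closeness of $\psi\in\wMl$ to $\phi\in\Mbl$ forces $\norm{\psi}\le b+\rho(\phi)$, so by monotonicity of $\lambda$ any such $\psi\in\C\cup\D$ sits in the fixed space $\MD_{b+\rho(\phi),\lambda(b+\rho(\phi))}$; continuity of $\rho$ then guarantees that, as $\phi'$ ranges over a small graphical neighborhood of $\phi$, the perturbed balls $\wB(\phi',\rho(\phi'))\cap(\C\cup\D)$ stay in a single $\MD_{b',\lambda(b')}$ where Proposition \ref{thm:compact} applies.

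For (A1), take $\phi_i\rap\phi$ in $\C_\rho\cap\Mbl$. Local compactness puts $\phi\in\Mbl$. Pick $\psi_i\in\wB(\phi_i,\rho(\phi_i))\cap\C$; the preliminary observation confines $\set{\psi_i}$ to a common $\MD_{b',\lambda(b')}$, so along a subsequence $\psi_i\rap\psi$ there. Closedness of $\C\cap\MD_{b',\lambda(b')}$ from the nominal (A1) yields $\psi\in\C$, and a $\dt$-triangle inequality combined with continuity of $\rho$ places $\psi$ in $\wB(\phi,\rho(\phi))$, so $\phi\in\C_\rho$. The argument for $\D_\rho\cap\Mbl$ is verbatim.

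For (A2), nonemptiness of $\F_\rho(\phi)$ on $\C_\rho\cap\Mbl$ is immediate from the definition of $\C_\rho$ and the nominal (A2); convexity is automatic from taking the closed convex hull and adding a ball. Local boundedness near $\phi$ is inherited from local boundedness of $\F$, because the perturbed balls stay in a bounded subset of a common $\MD_{b',\lambda(b')}$ on which $\F$ is uniformly bounded. For outer semicontinuity, given $\phi_i\rap\phi$ in $\C_\rho\cap\Mbl$ and $y_i\to y$ with $y_i=z_i+w_i$, $z_i\in\overline{\text{con}}\,\F(\wB(\phi_i,\rho(\phi_i))\cap\C)$ and $w_i\in\rho(\phi_i)\mathbb{B}$, continuity of $\rho$ extracts $w_i\to w\in\rho(\phi)\mathbb{B}$ and hence $z_i\to z:=y-w$. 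Carathéodory lets me write $z_i=\sum_{k=0}^{n}\alpha_i^k f_i^k$ with $f_i^k\in\F(\eta_i^k)$, $\eta_i^k\in\wB(\phi_i,\rho(\phi_i))\cap\C$, $\alpha_i^k\ge 0$, $\sum_k\alpha_i^k=1$; compactness together with OSC and local boundedness of $\F$ then produces subsequential limits $\eta_i^k\rap\eta^k\in\wB(\phi,\rho(\phi))\cap\C$, $f_i^k\to f^k\in\F(\eta^k)$, and $\alpha_i^k\to\alpha^k$, so $z=\sum_k\alpha^k f^k\in\overline{\text{con}}\,\F(\wB(\phi,\rho(\phi))\cap\C)$ and therefore $y\in\F_\rho(\phi)$.

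For (A3), nonemptiness and local boundedness of $\G_\rho$ are handled exactly as for $\F_\rho$, without the convexification step. For outer semicontinuity, given $\phi_i\rap\phi$ and $y_i\to y$ with $y_i=v_i+\rho(\A^\Delta_{[0,1]}(\phi_i)^+_{v_i})e_i$, $v_i\in\G(\psi_i)$, $\psi_i\in\wB(\phi_i,\rho(\phi_i))\cap\D$, $\abs{e_i}\le 1$, I extract subsequential limits $\psi_i\rap\psi\in\wB(\phi,\rho(\phi))\cap\D$, $v_i\to v\in\G(\psi)$, and $e_i\to e\in\mathbb{B}$; graphical convergence $(\phi_i)^+_{v_i}\rap\phi^+_v$ and graphical continuity of the shift $\A^\Delta_{[0,1]}$, combined with continuity of $\rho$, close out $y=v+\rho(\A^\Delta_{[0,1]}\phi^+_v)e\in\G_\rho(\phi)$. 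The main obstacle I anticipate is the Carathéodory-plus-diagonal passage to the limit in the OSC proof for $\F_\rho$, together with the bookkeeping required to keep all perturbed arcs inside a single ambient $\MD_{b',\lambda(b')}$ so that Proposition \ref{thm:compact} can be invoked uniformly; the remaining steps reduce cleanly to the nominal (A1)--(A3) and continuity of $\rho$.
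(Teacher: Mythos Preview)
Your proposal is correct and follows essentially the same route as the paper: closedness of $\C_\rho\cap\Mbl$ via extracting $\psi_i\in\wB(\phi_i,\rho(\phi_i))\cap\C$ inside a fixed $\MD_{b',\lambda(b')}$ and passing to the limit with a $\dt$-triangle inequality; local boundedness by confining the perturbed balls to a compact set; and outer semicontinuity of $\F_\rho$ via a Carath\'eodory decomposition followed by a diagonal extraction. The paper inserts an extra $\eps_i\to 0$ when writing $y_i=\sum_{l=1}^{n+1}\mu_i^l u_i^l+\eps_i+v_i$ to account for the closure in $\overline{\text{con}}$, whereas you apply Carath\'eodory directly to $z_i$; your shortcut is in fact justified because $\wB(\phi_i,\rho(\phi_i))\cap\C$ sits in a compact subset of $\MD_{b',\lambda(b')}$ on which $\F$ is outer semicontinuous and bounded, so $\F(\wB(\phi_i,\rho(\phi_i))\cap\C)$ is already compact and $\overline{\text{con}}=\text{con}$ there.
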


\begin{proof}
Let $\set{\phi_i}_{i=1}^{\infty}$ be a graphically convergent sequence in $\C_{\rho}\cap\MD_{b_0,\lambda_0}$ for some $b_0\ge 0$ and $\lambda_0\ge 0$. It follows from Proposition \ref{thm:compact} that $\phi_i\rap\phi$ for some $\phi\in \MD_{b_0,\lambda}$, where $\lambda:=\min(\lambda(b),\lambda_0)$. In addition, by Proposition \ref{dprop} in Section \ref{app:distance}, there exists a sequence $\eps_n\ra 0$ and a sequence $N_n\ra\infty$ such that $\phi_i$ and $\phi$ are $\eps_n$-close for all $i>N_n$. By definition of $\C_\rho$, for each $\phi_i$, there exists $\psi_i\in \C$ such that $\phi_i$ and $\psi_i$ are $\rho(\phi_i)$-close for all $i\ge 1$. 
Since $\rho(\phi_i)\ra\rho(\phi)$, it follows that there exists $b'\ge 0$ such that $\psi_i\in \C\cap\MD_{b'}$ for all $i\ge 1$. It follows from the closedness of $\C\cap\MD_{b'}$ that $\psi_i\ra\psi$ for some  $\psi\in \C\cap\MD_{b'}$. Again, by Proposition \ref{dprop} in Section \ref{app:distance}, there exists a sequence $\eps'_n\ra 0$ and a sequence $N'_n$ such that $\psi_i$ and $\psi$ are $\eps'_n$-close for all $i>N'_n$. It follows that $\psi$ and $\phi$ are $(\eps_n+\eps'_n+\rho(\phi_i))$-close for all $i>\max(N_n,N'_n)$. Letting $n\ra\infty$ implies that $\psi$ and $\phi$ are $\rho(\phi)$-close. Since $\psi\in \C\cap \wMDl$, we have $\phi\in\C_{\rho}$ by definition. 

To prove local boundedness of $\F_{\rho}$ relative to $\C_{\rho}$, fixed any $\phi\in \C_{\rho}$, $b\ge 0$, and a neighbourhood $\U_{\phi}$ of $\phi$. We show that $\F_{\rho}(\U_{\phi}\cap \MD_{b}\cap\C_{\rho})$ is bounded. Note that the set $\U:=\U_{\phi}\cap \MD_{b}\cap \C_{\rho}\subset \MD_{b,\lambda(b)}$ is a compact set in $(\MD,\d)$. It follows from the continuity of $\rho$ that $\rho$ is bounded on $\U$ and hence the set $\wB(\psi,\rho(\psi))\cap \C$ for $\psi\in \U$ belongs to a set $\MD_{b'}$ for some $b'>b\ge 0$. Therefore, $\wB(\psi,\rho(\psi))\cap \C\subset \MD_{b',\lambda(b')}$ for all $\psi\in\U$. Since $\MD_{b',\lambda(b')}$ is a compact set, it follows that from the local boundedness  of $\F$ relative to $\C$ that $\F$ is bounded on $\bigcup_{\psi\in \U}\wB(\psi,\rho(\psi))\cap \C$.

To prove outer semicontinuity of $\F_{\rho}$ relative to $\C_{\rho}\cap\MD_{b}$, letting $\set{\phi_i}_{i=1}^{\infty}$ be a graphically convergent sequence in $\C_{\rho}\cap\MD_{b}$ and a sequence $\set{y_i}_{i=1}^{\infty}$ such that $y_i\in \F_{\rho}(\phi_i)$ and $y_i\ra y$. The goal is to show that $y\in \F_{\rho}(\phi)$. By the definition of $\F_{\rho}$, there exists a sequence $\eps_i\ra 0$ such that, for each $i$, $y_i=\sum_{l=1}^{n+1}\mu_i^{l}u_i^{l}+\eps_i+v_i$, where $\sum_{l=1}^{n+1}\mu_i^{l}=1$, $\mu_i^{l}\in[0,1]$, $u_i^{l}\in \F(\psi_i^l)$, $\psi_i^l\in \wB(\phi_i,\rho(\phi_i))\cap\C$, and $v_i\in\rho(\phi_i)\mathbb{B}$. By local boundedness of $\F_{\rho}$ and continuity of $\rho$, the sequences $\mu_i^{l}$, $u_i^{l}$, and $v_i$ are bounded. Moreover, $\psi_i^l\in \C\cap \wB(\phi_i,\rho(\phi_i))\subset \C\cap \MD_{b}$ for some $b>0$. There exists a subsequence of $\set{\phi_i}_{i=1}^{\infty}$ such that the corresponding subsequences of $\mu_i^{l}$, $u_i^{l}$, $\psi_i^l$, and $v_i$ converges to $\mu^l$ with $\sum_{l=1}^{n+1}\mu^{l}=1$, $u\in \F(\psi)$, $\psi\in \C\cap \MD_{b}\cap \wB(\phi,\rho(\phi))$, and $v\in\rho(\phi)\mathbb{B}$, respectively. It follows that, without relabelling the subsequences, $y=\sum_{l=1}^{n+1}\mu^{l}u^{l}+v\in \text{con}\F(\psi)+\rho(\phi)\mathbb{B}\subset \F_{\rho}(\phi)$. The proof for the properties of $\D$ and $\G_{\rho}$ are similar. 
\end{proof}

\begin{defn}[Well-posedness of hybrid systems with memory]\em
A hybrid system $\HMD$ is said to be \emph{well-posed} if the following properties hold: for any given  continuous function $\rho:\,\MD\ra\Real_{\ge 0}$, a decreasing sequence $\set{\delta_i}_{i=1}^\infty$ in $(0,1)$ with $\delta_i\ra 0$ as $i\ra\infty$, and for every graphically convergent sequence $\set{x_i}_{i=1}^\infty$ of solutions to $(\HMD)_{\delta_i\rho}$ with { $\AD_{[0,0]}x_i\in \Mbl$ for some $\bl\ge 0$} and $\AD_{[0,0]}x_i\rap \phi\in\MD$,
\begin{enumerate}[(a)]
\item if the sequence $\set{x_i}_{i=1}^\infty$ is locally eventually bounded (that is, for any $m>0$, there exists $N>0$ and $k>0$ such that, for all $i>N$ and all $(t,j)\in\dom x_i$ with $t+j<m$, $|x_i(t,j)|<k$), then its graphical limit $x$ is a solution to $\HMD$ with $\AD_{[0,0]}x=\phi$ and $\dom x=\lim_{i\ra\infty}\dom x_i$;
\item if the sequence $\set{x_i}_{i=1}^\infty$ is not locally eventually bounded, then there exist some $T,\,J\in(0,\infty)$ and a sequence $\set{t_i}_{i=1}^\infty$ with $(t_i,J)\in\dom{x_i}$ for sufficiently large $i$ such that $\lim_{i\ra\infty}t_i=T$, $\lim_{i\ra\infty}\abs{x_i(t_i,J)}=\infty$, and the limit $x=\limg_{i\ra\infty} x_i$ restricted to the domain $\set{(t,j)\in\dom x:\,t+j<T+J}$ is a maximal solution to $\HMD$ with $\AD_{[0,0]}x=\phi$ and $\lim_{t\ra T}\abs{x(t,J)}=\infty$.
\end{enumerate}
\end{defn}

\begin{thm}\label{thm:well-posedness}
If a hybrid system with memory $\HMD=(\C,\F,\D,\G)$ satisfies Assumption \ref{as1}, then it is well-posed.
\end{thm}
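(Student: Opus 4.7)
The plan is to mimic, in the memory setting, the graphical-limit proof of well-posedness for memoryless hybrid systems (as in Chapter 6 of \cite{goebel2012hybrid}). By the preceding proposition, every perturbed system $(\HMD)_{\delta_i\rho}$ also satisfies Assumption \ref{as1}, so the sequence $\{x_i\}$ consists of solutions to hybrid functional inclusions with data that ``contract'' onto $(\C,\F,\D,\G)$ as $i\to\infty$. First I would settle case (a), and then reduce case (b) to case (a) by restricting to a suitable truncated time domain.

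For case (a), fix any $m>0$. Since $\AD_{[0,0]}x_i\in\Mbl$ and, by the local boundedness established in the preceding proposition, $\F_{\delta_i\rho}$ is uniformly bounded on compact subsets of $\wMDl$, the flow pieces of $x_i$ on $\{t+j<m\}$ are uniformly Lipschitz; combined with the local eventual boundedness hypothesis, their restrictions lie in a single $\MD_{b_m,\lambda_m}$, which is compact by Proposition \ref{thm:compact}. The hypothesized graphical convergence then produces a limit hybrid arc $x$ with $\dom x=\lim_{i\to\infty}\dom x_i$; identification of this limit as a hybrid time domain follows from the set-convergence arguments already invoked in the proof of Proposition \ref{thm:compact}.

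To verify that $x$ solves $\HMD$, fix $(t,j)$ in the interior of a flow interval of $x$. For large $i$, $(t,j)$ lies in a flow interval of $x_i$ with $\dot{x}_i(t,j)\in\F_{\delta_i\rho}(\AD_{[t,j]}x_i)$. Continuity of $t\mapsto\AD_{[t,j]}x$ (Lemma \ref{lem:cont}) together with $x_i\rap x$ gives $\AD_{[t,j]}x_i\rap\AD_{[t,j]}x$. By definition of $\F_{\delta_i\rho}$, each $\dot{x}_i(t,j)$ admits a representation $\sum_l\mu_i^l u_i^l+v_i$ with $u_i^l\in\F(\psi_i^l)$, $\psi_i^l\in\wB(\AD_{[t,j]}x_i,\delta_i\rho(\AD_{[t,j]}x_i))\cap\C$, and $v_i\in\delta_i\rho(\AD_{[t,j]}x_i)\B$; since $\delta_i\to 0$ and $\rho$ is continuous, Proposition \ref{dprop} forces $\psi_i^l\rap\AD_{[t,j]}x$, so outer semicontinuity of $\F$ places every subsequential limit of $u_i^l$ in $\F(\AD_{[t,j]}x)$, and the remainder $v_i$ vanishes. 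Convexity of $\F(\AD_{[t,j]}x)$ absorbs the convex combination, after which the Mazur--weak-convergence step used at the end of the proof of Theorem \ref{thm:ext} upgrades the pointwise inclusion to $\dot{x}(t,j)\in\F(\AD_{[t,j]}x)$ for almost every $t$. The membership $\AD_{[t,j]}x\in\C$ follows from closedness of $\C\cap\Mbl$ together with the fact that each $\wB(\phi,\delta_i\rho(\phi))\cap\C$ shrinks graphically onto $\AD_{[t,j]}x$; the jump clauses, including $\AD_{[t,j]}x\in\D$ and $x(t,j+1)\in\G(\AD_{[t,j]}x)$, are handled identically using outer semicontinuity of $\G$ and the structure of $\G_{\delta_i\rho}$.

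The main technical obstacle, pervading every step above, is that the perturbation balls $\wB(\cdot,\delta_i\rho(\cdot))$ are defined via the $\eps$-closeness of Section \ref{app:distance}, whereas the convergence $x_i\rap x$ is graphical; the bridge between the two is Proposition \ref{dprop}, and a careful common choice of $b$ (so that all perturbed quantities live in one compact $\MD_{b,\lambda(b)}$) is essential to make outer semicontinuity and local boundedness applicable. For case (b), if $\{x_i\}$ fails to be locally eventually bounded, I would let $T+J$ be the infimum of $\tau+j$ at which uniform boundedness breaks down, select a sequence $(t_i,J)$ with $t_i\to T$ and $|x_i(t_i,J)|\to\infty$, and apply case (a) on every compact sub-rectangle strictly below $(T,J)$; this produces a solution of $\HMD$ on $\{t+j<T+J\}$ which cannot be extended to $(T,J)$, exactly as required by (b).
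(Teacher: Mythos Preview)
Your overall strategy matches the paper's, but there is a genuine gap in the step where you claim that $x_i\rap x$ together with Lemma \ref{lem:cont} yields $\AD_{[t,j]}x_i\rap\AD_{[t,j]}x$. Lemma \ref{lem:cont} only asserts continuity of $t\mapsto\AD_{[t,j]}x$ for a \emph{fixed} hybrid arc $x$; it says nothing about how the truncation operator $\AD_{[t,j]}$ interacts with graphical limits of a \emph{sequence} of arcs. Graphical convergence of the full arcs $x_i$ does not immediately imply graphical convergence of the memory arcs, because $\AD_{[t,j]}$ involves both a shift and a domain-dependent truncation via $\Delta_{\inf}$, neither of which commutes with set limits for free. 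The paper supplies this missing step via a separate lemma (Lemma \ref{lem:conv}, proved using a decomposition of $\gph\AD_{[t,j]}z_i$ and set-distance estimates in Lemma \ref{lem:setdist}).

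The same issue reappears in your treatment of jumps. When $(t,j)$ and $(t,j+1)$ are both in $\dom x$, the corresponding jump in $x_i$ typically occurs at some $s_i\neq t$, and you need to construct a sequence $s_i\to t$ with $(s_i,j),(s_i,j+1)\in\dom x_i$ and prove both $\AD_{[s_i,j]}x_i\rap\AD_{[t,j]}x$ and $z_i(s_i,j+1)\to z(t,j+1)$; this is precisely part 2) of Lemma \ref{lem:conv}. Saying the jump clauses are ``handled identically'' hides this. Once Lemma \ref{lem:conv} is in place, the rest of your argument (the Carath\'eodory-style representation of $\F_{\delta_i\rho}$, the Proposition \ref{dprop} bridge between $\eps$-closeness and $\d$, and the Mazur step) is correct and close to what the paper does.
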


\begin{proof}
Let $\set{z_i}_{i=1}^{\infty}$ be a graphically convergent sequence of solutions to $(\HMD)_{\delta_i\rho}$ with limit $z$. If $\set{z_i}_{i=1}^{\infty}$ is locally eventually bounded, { it follows from the proof of Lemma \ref{lem:conv} that $\dom z=\lim_{i\ra \infty}\dom z_i$ is a hybrid time domain and $z$ is single-valued and locally Lipschitz on each $I_j=\set{t:\,(t,j)\in\dom z}$}. Moreover, $\set{z_i}_{i=1}^{\infty}$ converges uniformly on each compact subinterval of $\text{int}(I_j)$. We need to show that the limit $z$ is a solution to $\HMD$. To do so, we have to check that $z$ satisfies conditions (S1) and (S2) in Definition \ref{def:sol}, which are the flow and jump constraints, respectively.

\emph{\bfseries Proof of (S1):} We first prove the following. Let $I$ be any given compact subinterval of $\text{int}(I_j)$.

\noindent\emph{\bfseries Claim I:} Given any $\eta>0$, there exists $N$ sufficiently large such that
\begin{align}\label{esteta2}
\dot{z}_{n}(t,j)\in \F(\AD_{[t,j]}z) +\eta\B,
\end{align}
holds for almost all $t\in I$ and all $n\ge N$.

\noindent\emph{\bfseries Proof of Claim I:} Note that $\dot{z}_{n}(t,j)\in \F_{\delta_n\rho}(\AD_{[t,j]}z_n)=\overline{\text{con}}\F(\wB(\AD_{[t,j]}z_n,\delta_n\rho(\AD_{[t,j]}z_n))\cap \C)+\delta_n\rho(\AD_{[t,j]}z_n)\mathbb{B}$. First of all, from Proposition \ref{dprop}(c), we have
\begin{align}\label{eq:wB}
\wB(\AD_{[t,j]}z_n,\delta_n\rho(\AD_{[t,j]}z_n)\subset\B(\AD_{[t,j]}z_n,\sqrt{2}\delta_n\rho(\AD_{[t,j]}z_n).
\end{align}
Without loss of generality, we can assume that $\AD_{[t,j]}z_n\in \Mbl$ for some $\bl\ge 0$. For $\bl\ge 0$, there exists $\beta>0$ such that, for any  $\psi\in\Mbl$ satisfying $\d(\psi,\AD_{[t,j]}z)\le\beta$,  $\F(\psi)\subset\F(\AD_{[t,j]}z)+\eta/2\B$. This is possible because $\F$ is outer semicontinuous and locally bounded (which implies upper semicontinuity). Moreover, since $\set{\AD_{[t,j]}z:t\in I}$ belongs to a compact subset of $(\M,\d)$, we can choose $\beta$ independent of $t\in I$. Note that $\rho$ is bounded on the compact set $\set{\AD_{[t,j]}z:t\in I}$. Moreover, 
{ Part of 1) of Lemma \ref{lem:conv}} shows that $\d(\AD_{[t,j]}z_n,\AD_{[t,j]}z)\ra 0$ for all $t\in I$. 
Therefore, we can choose $N$ such that $\d(\AD_{[t,j]}z_n,\AD_{[t,j]}z)\le \beta/2$ and  $\sqrt{2}\delta_n\rho(\AD_{[t,j]}z_n)\le \min(\eta/2,\beta/2)$ for all $n\ge N$ and $t\in I$. It follows that
\begin{equation}\label{eq:inc}
\mathcal{B}(\AD_{[t,j]}z_n,\sqrt{2}\delta_n\rho(\AD_{[t,j]}z_n))\subset \mathcal{B}(\AD_{[t,j]}z,\beta/2+\beta/2)\subset  \mathcal{B}(\AD_{[t,j]}z,\beta).
\end{equation}
Hence,
\begin{align}
\F_{\delta_n\rho}(\AD_{[t,j]}z_n)&\subset \overline{\text{con}}\F(\mathcal{B}(\AD_{[t,j]}z_n,\delta_n\rho(\AD_{[t,j]}z_n)))+\delta_n\rho(\AD_{[t,j]}z_n)\mathcal{B}\notag\\&\subset\overline{\text{con}}\F(\mathcal{B}(\AD_{[t,j]}z,\beta))+\eta/2\B \subset \F(\AD_{[t,j]}z)+\eta\B.\label{eq:fdelta}
\end{align}
We have proved (\ref{esteta2}). $\blacksquare$

Based on (\ref{esteta2}), we can prove $\dot{z}(t,j)\in \F(\AD_{[t,j]}z)$ for almost all $t\in I$. This is similar to showing $\dot{X}(t,0)\in \F(\AD_{[t,0]}X)$ in the proof of Theorem \ref{thm:ext}.

Now we prove that $\AD_{[t,j]}z\in \C$ for all $t\in I$. Fix any $t\in I$. Since $\AD_{[t,j]}z_n\in \C_{\delta_n\rho}$, it follows that
$$\mathcal{B}(\AD_{[t,j]}z_n,\sqrt{2}\delta_n\rho(\AD_{[t,j]}z_n))\cap \C\neq\emptyset.$$
Choose $y_n\in \mathcal{B}(\AD_{[t,j]}z_n,\sqrt{2}\delta_n\rho(\AD_{[t,j]}z_n))\cap \C$.
It follows that
\begin{align*}
\d(y_n,\AD_{[t,j]}z)&\le \d(y_n,\AD_{[t,j]}z_n)+\d(\AD_{[t,j]}z_n,\AD_{[t,j]}z)\\
&\le \sqrt{2}\delta_n\rho(\AD_{[t,j]}z_n)+\d(\AD_{[t,j]}z_n,\AD_{[t,j]}z).
\end{align*}
{ Part of 1) of Lemma \ref{lem:conv}} shows that $\d(\AD_{[t,j]}z_n,\AD_{[t,j]}z)\ra 0$ as $n\ra \infty$. Moreover, by continuity of $\rho$, we have $\rho(\AD_{[t,j]}z_n)\ra \rho(\AD_{[t,j]}z)$. Hence $\d(y_n,\AD_{[t,j]}z)\ra 0$ as $n\ra\infty$.
Since $\C$ is closed, $\AD_{[t,j]}z\in\C$. This holds for all $t\in I$.

\emph{\bfseries Proof of (S2):} { Given any $(t,j)\in \dom z$ such that $(t,j+1)\in \dom z$, let $\set{s_n}_{n=1}^{\infty}$ be a sequence given by { part 2) of Lemma \ref{lem:conv}}. First, $\AD_{[s_n,j]}z_n\in \D_{\delta_n\rho}$ and $\AD_{[s_n,j]}z_n\rap \AD_{[t,j]}z$ imply that $\AD_{[t,j]}z\in \D$. This is similar to how we show $\AD_{[t,j]}z\in \C$ above.} Second, we prove that, given any $\eta>0$, there exists $N$ sufficiently large such that
\begin{align}\label{esteta3}
z_n(s_n,j+1)\in \G(\AD_{[t,j]}z) +\eta\B,
\end{align}
holds for all $n\ge N$. We have $z_n(s_n,j+1)\in\G_{\delta_n\rho}(\AD_{[s_n,j]}z_n)$. Let $\phi=\AD_{[s_n,j]}z_n$. For each $y\in \G_{\delta_n\rho}(\phi)$, by definition, there exists $v\in \G(\mathcal{B}(\phi,\delta_n\rho(\phi))\cap \D)$ such that $y\in v + \delta_n\rho(\phi^+_v)\B$. Similar to (\ref{eq:fdelta}), we have, for sufficiently large $n$,
$
v\in \G(\mathcal{B}(\phi,\delta_n\rho(\phi)))
\subset \G(\AD_{[t,j]}z) +\eta/2\B.
$
Moreover, $\delta_n\rho(\phi^+_v)\B\subset \eta/2\B$ for sufficiently large $n$. This is because $\phi^+_v$ for $v\in \G(\mathcal{B}(\AD_{[s_n,j]}z_n,\delta_n\rho(\AD_{[s_n,j]}z_n))\cap \D)$ eventually belongs to a compact set in $(\MD,\d)$. Hence, (\ref{esteta3}) holds. By part 2) of Lemma \ref{lem:conv}, $\lim_{n\rightarrow \infty}z_n(s_n,j+1)=z(t,j+1)$. It follows from (\ref{esteta3}) that 
\begin{align}\label{esteta4}
z(t,j+1)\in \G(\AD_{[t,j]}z) +2\eta\B,
\end{align}
for any $\eta>0$. Since $\eta$ is arbitrary and $\G(\AD_{[t,j]}z)$ is a closed set (due to outer  semicontinuity of $\G$), we have actually proved $z(t,j+1)\in \G(\AD_{[t,j]}z)$.

{ 
{ The proof for the case where $\set{z_i}_{i=1}^{\infty}$ is not locally eventually bounded is similar to that for Theorem 6.30 in \cite{goebel2012hybrid}.} The main idea is to choose $J$ to be the least of $j\in\Z^+$ such that the sequence $z_i$ restricted to $\Real\times \set{-\infty,\cdots,J-1, J}$ is not locally eventually bounded and $T$ be the least of all $t$'s for which there exists a subsequence of $\set{z_i}_{i=1}^{\infty}$ (without relabelling) and $(t_i,J)\in\dom z_i$ such that $t_i\ra t$ and $\abs{z_i(t_i)}\ra\infty$. The details are omitted.}
\end{proof}

\begin{defn}\em
Given a set $\mathcal{K}\subset\MD$, a hybrid system with finite memory $\HMD$ is said to be \emph{pre-forward complete} from $\K$ if every solution $x$ of $\HMD$ with $\AD_{[0,0]}x\in\K$ is either bounded (i.e., $\sup_{(t,j)\in\dom x}\abs{x(t,j)}<\infty$) or complete.
\end{defn}

\begin{prop}[Local uniform boundedness]\label{prop:lub}
Let $\HMD$ be well-posed and suppose that it is pre-forward complete from $\MD_{b,\lambda}$ for some $b,\lambda\in\Real_{\ge 0}$. Given any continuous function $\rho:\,\MD\ra\Real_{\ge 0}$ and a decreasing sequence $\set{\delta_i}_{i=1}^\infty$ in $(0,1)$ with $\delta_i\ra 0$ as $i\ra\infty$, then, for each $m>0$, there exists $\delta>0$, $N>0$, and $b'>0$ such that, for each solution $x$ to $(\HMD)_{\delta_i\rho}$ with $\AD_{[0,0]}x$ being $\delta$-close with some $\psi\in \MD_{b,\lambda}$, $\AD_{[t,j]}x\in \MD_{b'}$ for all $t+j<m$.
\end{prop}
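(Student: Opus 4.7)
The plan is to argue by contradiction. Suppose the conclusion fails for some $m>0$. Then for every choice of $\delta>0$, $N\in\Z_{\ge 0}$, and $b'>0$ we can exhibit a solution that violates the bound. Taking $\delta=1/i$, $N=i$, $b'=i$, we extract a subsequence $\set{\delta_{k_i}}$ of $\set{\delta_i}$, solutions $x_i$ to $(\HMD)_{\delta_{k_i}\rho}$, hybrid memory arcs $\psi_i\in\MD_{b,\lambda}$ with $\AD_{[0,0]}x_i$ being $(1/i)$-close to $\psi_i$, and points $(t_i,j_i)\in\dom x_i$ with $t_i+j_i<m$ and $\abs{x_i(t_i,j_i)}\ge i$. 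By Proposition \ref{thm:compact} applied to $\MD_{b,\lambda}$, pass to a subsequence (without relabelling) so that $\psi_i\rap\psi$ for some $\psi\in \MD_{b,\lambda}$. Using Proposition \ref{dprop} from the appendix, the $(1/i)$-closeness implies $\AD_{[0,0]}x_i\rap\psi$ as well.

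Next I would verify the hypotheses of the well-posedness definition. Since $\AD_{[0,0]}x_i\in \C_{\delta_{k_i}\rho}\cup\D_{\delta_{k_i}\rho}\subset \wMDl$ by the standing assumption on $\HMD$, and the values of $\AD_{[0,0]}x_i$ are bounded by $b+1$ because $\AD_{[0,0]}x_i$ is $1$-close to $\psi_i\in\MD_{b,\lambda}$, monotonicity of $\lambda(\cdot)$ yields $\AD_{[0,0]}x_i\in \MD_{b+1,\lambda(b+1)}$ for every $i$. Thus the sequence of initial data lies in a common class $\MD_{\bl}$. A further subsequence allows one to extract a graphical limit $x$ of $\set{x_i}$, using local compactness of $(\text{cl-sets}_{\not\equiv\emptyset}(\Real^{n+2}),\d)$ from Theorem \ref{thm:distance}.

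Now I would use the well-posedness of $\HMD$. The sequence $\set{x_i}$ is manifestly not locally eventually bounded, because $\abs{x_i(t_i,j_i)}\ge i$ with $t_i+j_i<m$ uniformly. Hence case (b) of well-posedness applies: there exist $T,J\in(0,\infty)$ and $\set{\hat t_i}$ with $(\hat t_i,J)\in\dom x_i$, $\hat t_i\ra T$, $\abs{x_i(\hat t_i,J)}\ra\infty$, such that $x$ restricted to $\set{(t,j):t+j<T+J}$ is a maximal solution of $\HMD$ with $\AD_{[0,0]}x=\psi$ and $\lim_{t\ra T}\abs{x(t,J)}=\infty$. However, $\psi\in \MD_{b,\lambda}$, so pre-forward completeness of $\HMD$ from $\MD_{b,\lambda}$ forces this maximal solution to be bounded or complete. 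The blow-up property rules out boundedness, while the restriction $t+j<T+J$ with $T,J<\infty$ forces $\dom_{\ge 0}(x)$ to be bounded, ruling out completeness. This contradiction establishes the proposition.

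The main technical obstacle is ensuring that the initial data of the perturbed solutions all live in a \emph{common} $\MD_{b',\lambda'}$, as the definition of well-posedness requires; otherwise well-posedness cannot be invoked directly. This is resolved by combining the value bound that follows from $(1/i)$-closeness with the structural assumption $\C\cup\D\subset\wMl$ on the original system, which transfers (via monotonicity of $\lambda(\cdot)$) to the perturbations $\C_{\delta_{k_i}\rho}\cup\D_{\delta_{k_i}\rho}$ and pins down the Lipschitz constant uniformly in $i$. Once that is in hand, the argument is a routine invocation of case (b) of well-posedness together with the definition of pre-forward completeness.
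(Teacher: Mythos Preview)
Your approach is exactly the paper's: argue by contradiction, extract a sequence of perturbed solutions that is not locally eventually bounded and whose initial data graphically converge into $\MD_{b,\lambda}$, then invoke part (b) of the well-posedness definition to produce a maximal solution of $\HMD$ from $\MD_{b,\lambda}$ that blows up in finite hybrid time, contradicting pre-forward completeness. The paper's proof is a two-sentence sketch; you have filled in essentially all of the details it omits.

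One technical step in your write-up is not quite right as stated. From $\AD_{[0,0]}x_i\in\wMl$ and $\norm{\AD_{[0,0]}x_i}\le b+1$ you conclude, via monotonicity of $\lambda(\cdot)$, that $\AD_{[0,0]}x_i\in\MD_{b+1,\lambda(b+1)}$. That inference fails in general: membership in $\wMl=\bigcup_{c\ge 0}\MD_{c,\lambda(c)}$ only gives some $c_i$ with $\AD_{[0,0]}x_i\in\MD_{c_i,\lambda(c_i)}$, and if $c_i>b+1$ then $\lambda(c_i)\ge\lambda(b+1)$, so the Lipschitz bound does not tighten to $\lambda(b+1)$. In other words, a small sup-norm does not by itself force a small Lipschitz constant within $\wMl$. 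The paper's own proof is terse enough that it does not address this point either, so you are not missing an idea that the paper supplies; but your stated justification for the uniform $\Mbl$ membership required by the well-posedness definition should be revisited.
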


\begin{proof}
Suppose the conclusion is not true. Then there exists a sequence of solutions $\set{x_i}_{i=1}^\infty$ to $(\HMD)_{\delta_i\rho}$ that is not locally eventually bounded and $\AD_{[0,0]}x_i\rap x\in \MD_{b,\lambda}$. We can extract a graphically convergent sequence of $x_i$ and use part (b) of the well-posedness theorem to show that there exists a maximal solution of $\HMD$ that blows up in finite time, which contradicts the definition of pre-forward completeness of $\HMD$.
\end{proof}

\section{Robustness of $\KL$ stability}\label{sec:robustness}

In this section, we prove that under the well-posedness condition, $\KL$ pre-asymptotic stability of hybrid systems with memory is robust in the following sense. This section is an extension of the results in Section 7.3 of \cite{goebel2012hybrid}, in particular Theorem 7.21 there, to hybrid systems with finite memory.

\begin{defn}[Robust $\mathcal{KL}$ pre-asymptotic stability]
Let $\W\subset\Real^n$ be a compact set and $\HMD$ be a hybrid system with memory defined in $\wMl$.
\begin{enumerate}[(a)]
\item The set $\W$ is \emph{robustly $\KL$ pre-asymptotically stable} for $\HMD$ if there exists a continuous function $\rho:\,\MD\ra \Real_{\ge 0}$ that is positive on
$$
\MD\backslash\W:=\set{\phi\in\MD:\,\exists(s,k)\in\dom\phi\;\text{s.t.}\;\phi(s,k)\not\in\W}$$ such that all solutions $x$ of $(\HMD)_{\rho}$ satisfy
\begin{equation}\label{eq:robust}
\abs{x(t,j)}_{\W}\le \beta(\norm{\AD_{[0,0]}x}_{\W},t+j), \quad \forall (t,j)\in\dom x.
\end{equation}

\item The set $\W$ is \emph{semi-globally practically robustly $\KL$ pre-asymptotically stable} for $\HMD$ if for every continuous function $\rho:\,\MD\ra \Real_{\ge 0}$ that is positive on $\MD\backslash\W$, the following holds: for every $\MD_{b}$ with $b\ge 0$ and every $\eps>0$, there exists $\delta\in(0,1)$ such that every solution $x$ to $(\HMD)_{\delta\rho}$ starting in $\MD_b$ satisfies
\begin{equation}\label{eq:practical}
\abs{x(t,j)}_{\W}\le \beta(\norm{\AD_{[0,0]}x}_{\W},t+j)+\eps, \quad \forall (t,j)\in\dom x.
\end{equation}
\end{enumerate}
\end{defn}

\begin{lem}\label{lem:stab1}
Let $\HMD$ be a hybrid system with memory and $\W\subset\Real^n$ be a compact set. If $\W$ is semi-globally practically robustly $\KL$ pre-asymptotically stable, then it is robustly $\KL$ pre-asymptotically stable.
\end{lem}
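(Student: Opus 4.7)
The plan is to manufacture a single continuous function $\rho:\MD\to\Real_{\ge 0}$, positive on $\MD\backslash\W$, out of the family of admissible perturbation levels supplied by the semi-global practical hypothesis, and then to verify (\ref{eq:robust}) by a cascade of applications of the practical estimate as the trajectory traverses successively smaller shells around $\W$. The key structural observation is that if $\rho\le\widetilde\delta\,\rho_0$ pointwise on a set, then $\C_\rho\subset\C_{\widetilde\delta\rho_0}$ and $\F_\rho\subset\F_{\widetilde\delta\rho_0}$ on that set (and similarly for the jump data), so every solution of $(\HMD)_\rho$ is also a solution of $(\HMD)_{\widetilde\delta\rho_0}$ on any portion of its trajectory where this pointwise inequality holds.

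First I would fix a continuous reference function $\rho_0:\MD\to\Real_{\ge 0}$ positive on $\MD\backslash\W$, for example $\rho_0(\phi):=\min\set{1,\norm{\phi}_\W}$. For each $k\in\Z$, I would set $R_k:=2^k$ and choose a shell-scaled tolerance $\eps_k:=R_k/4$, then invoke the semi-global practical hypothesis applied to $\rho_0$ with $b=2R_k$ and tolerance $\eps_k$ to obtain $\delta_k\in(0,1)$, further shrunk if necessary so that $\delta_k\to 0$ as $k\to\pm\infty$. Next I would patch these into a single function via a continuous partition of unity $\set{\lambda_k}_{k\in\Z}$ on $\MD\backslash\W$ subordinate to the open dyadic shells $U_k:=\set{\phi\in\MD:\,R_{k-1}<\norm{\phi}_\W<R_{k+1}}$, setting
$$\rho(\phi):=\rho_0(\phi)\sum_{k\in\Z}\lambda_k(\phi)\,\delta_k.$$
This $\rho$ is continuous on $\MD$, positive on $\MD\backslash\W$, and on the closed dyadic annulus $\set{R_{k-1}\le\norm{\phi}_\W\le R_k}$ it is dominated by $\max\set{\delta_{k-1},\delta_k}\,\rho_0$.

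To verify (\ref{eq:robust}), let $x$ be a solution of $(\HMD)_\rho$ with $r:=\norm{\AD_{[0,0]}x}_\W$ and let $k_0$ be the dyadic index of the initial shell. A bootstrap-and-contradiction argument (paralleling the one in the proof of Theorem \ref{thm:ext}) shows that $\AD_{[t,j]}x$ remains within $\set{\phi\in\MD:\,\norm{\phi}_\W\le 2R_{k_0}}$: on any such prefix we have $\rho\le\delta_{k_0}\rho_0$ pointwise along the trajectory, so $x$ is also a solution of $(\HMD)_{\delta_{k_0}\rho_0}$ on that prefix, and the practical estimate gives $\abs{x(t,j)}_\W\le\beta(r,t+j)+\eps_{k_0}$, which is strictly less than $2R_{k_0}$. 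As $\beta(r,t+j)$ decays past the next dyadic threshold $R_{k_0-1}$, the memory observation that for $t>\Delta+1$ the arc $\AD_{[t,j]}x$ inherits its sup-norm from the already-bounded evolved portion of $x$ rather than from the initial data permits restarting the argument in the smaller shell with parameters $(\delta_{k_0-1},\eps_{k_0-1})$. Iterating this cascade yields a bound $\abs{x(t,j)}_\W\le\widetilde\beta(r,t+j)$ for a $\KL$ function $\widetilde\beta$ obtained by telescoping the stage-wise estimates.

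The main obstacle is verifying that this telescoped bound is genuinely a $\KL$ function. The dyadic layering with shell-scaled tolerances $\eps_k\propto R_k$ is essential: at each stage the bound on $\abs{x(t,j)}_\W$ is at most the shell radius, so the trajectory is forced into the next-smaller shell in a finite time controlled by the $\KL$ decay rate of $\beta$, and the cumulative time to reach an arbitrarily small shell can be expressed as a $\KL$-type function of the initial norm. A secondary technical point is the continuity of $\rho$ with respect to the graphical-convergence topology; this follows because the map $\phi\mapsto\norm{\phi}_\W$ is continuous on each metric subspace $(\MD_{b,\lambda},\mathbf{d})$ by Proposition \ref{thm:compact}, and the partition of unity can be constructed coherently on the union $\wMDl$ in which $\C\cup\D$ is contained.
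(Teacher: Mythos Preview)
Your overall strategy---shell decomposition, construction of a single $\rho$ from the family of $\delta$'s, and a cascade of practical estimates---matches the paper's proof. But the bootstrap step has a genuine gap.

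You assert that the practical bound $\abs{x(t,j)}_\W\le\beta(r,t+j)+\eps_{k_0}$ is strictly less than $2R_{k_0}$, which is what closes the bootstrap. This is false in general: nothing prevents $\beta(s,0)\gg s$ for a $\KL$ function, so with $r\in(R_{k_0-1},R_{k_0}]$ and $\eps_{k_0}=R_{k_0}/4$ one can easily have $\beta(r,0)+\eps_{k_0}>2R_{k_0}$. Once the trajectory exits the region $\set{\norm{\phi}_\W\le 2R_{k_0}}$, the pointwise inequality $\rho\le\delta_{k_0}\rho_0$ no longer holds along it, and you lose the right to invoke the $\delta_{k_0}$-level practical estimate. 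The dyadic spacing $R_k=2^k$ is simply too tight to absorb the $\beta$-overshoot.

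The paper's remedy is to choose the shell radii adaptively: $r_{n+1}\ge 4\beta(r_n,0)\ge 4r_n$. Then $\beta(r_n,0)+r_{n-1}/2\le r_{n+1}/4+r_{n-1}/2<r_{n+1}/2$, so a solution starting with $\omega_1\le r_n$ is trapped strictly inside the $r_{n+1}$-ball, the bootstrap closes, and the descent through shells proceeds. The paper also enforces $\rho'(\phi)\le\min\set{\delta_{n-1},\delta_n,\delta_{n+1}}\rho(\phi)$ on the $n$-th annulus (three consecutive $\delta$'s, not the $\max$ of two as your construction gives), precisely so that the correct perturbation level is available both for the overshoot into shell $n{+}1$ and for the restart in shell $n{-}1$. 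Finally, the paper builds $\rho'$ by piecewise-linear interpolation in $\omega_1(\phi)=\norm{\phi}_\W$ rather than via a partition of unity, which sidesteps the continuity issue you flag in your last paragraph.
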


\begin{proof}
The proof is similar to that for Lemma 7.19 in \cite{goebel2012hybrid}. For notational convenience, let $w_2(z):=\abs{z}_{\W}$ for $z\in\Real^n$ and $w_1(\phi):=\norm{\phi}_{\W}$ for $\phi\in\MD$. Given any $\rho$ and $\beta$ in the definition of semi-global practical robust $\KL$ pre-asymptotical stability. Let $\set{r_n}_{n=-\infty}^{\infty}$ be a sequence such that $r_{n+1}\ge 4\beta(r_n,0)\ge 4r_n>0$ for all $n\in\mathbb{Z}$, where $r_n\ra 0$ as $n\ra-\infty$ and $r_n\ra\infty$ as $n\ra\infty$ and we have assumed, without loss of generality, that $\beta(s,0)\ge s$ for all $s\ge 0$. Since $\HMD$ is semi-globally practically robustly $\KL$ pre-asymptotically stable, it follows that for each $n\in\Z$, there exists $\delta_n\in(0,1)$ such that every solution $x$ to $(\HM)_{\delta_n\rho}$ with $\omega_1(\AD_{[0,0]}x)\le r_n$ satisfies
$$
\omega_2(x(t,j))\le \beta(\omega_1(\AD_{[0,0]}x),t+j)+\frac{r_{n-1}}{2},\quad \forall (t,j)\in\dom x.
$$
It follows that:\\
(i) $\omega_2(x(t,j))\le \beta(\omega_1(\AD_{[0,0]}x),0)+r_{n-1}/2\le r_{n+1}/4+r_{n-1}/2<r_{n+1}/2$ for all $(t,j)\in\dom x$, and \\
(ii) there exists $\tau_n$ such that $\omega_2(x(t,j))\le \beta(\omega_1(\AD_{[0,0]}x),t+j)+r_{n-1}/2\le \beta(r_n,t+j) + r_{n-1}/2\le r_{n-1}$ for $t+j\ge \tau_n$.

Consider the set $\mathcal{S}_n:=\set{\phi\in\MD:\,r_{n-1}\le\omega_1(\phi)\le r_n}$. This is a compact subset of $\MD$ since there exists some $b\ge 0$ such that $S_n\subset{\MD_b}$. It follows from the continuity of $\rho$ and the positiveness of $\rho$ on $\MD\backslash\W$ that $\inf_{\phi\in \mathcal{S}_n}\rho(\phi)>0$. Define $\rho':\,\MD\ra \Real_{\ge 0}$ such that $0<\rho'(\phi)\le\min\set{\delta_{n-1},\delta_n,\delta_{n+1}}\inf_{\phi\in \mathcal{S}_n}\rho(\phi)\le\min\set{\delta_{n-1},\delta_n,\delta_{n+1}}\rho(\phi)$ for $\phi\in \mathcal{S}_n$ for each $n\in\Z$. It is possible to make this $\rho'$ continuous. Indeed, one option is to define
$$
\rho'(\phi):=\eps_{n-1}+\frac{(\eps_{n}-\eps_{n-1})(\omega_1(\phi)-r_{n-1})}{r_n-r_{n-1}},\quad\phi\in \mathcal{S}_n,
$$
where $\eps_n=\min\set{\delta_{n-1},\delta_n,\delta_{n+1},\delta_{n+2}}\inf_{\phi\in \mathcal{S}_n}\rho(\phi)$. The continuity of $\rho'$ follows from that of $\omega_1(\phi)$. Moreover,
\begin{align*}
\rho'(\phi)&\le \max(\eps_{n-1},\eps_{n})\le \min\set{\delta_{n-1},\delta_n,\delta_{n+1}}\inf_{\phi\in \mathcal{S}_n}\rho(\phi)\\
&\le\min\set{\delta_{n-1},\delta_n,\delta_{n+1}}\rho(\phi),\quad \forall \phi\in \mathcal{S}_n.
\end{align*}
It can be verified that any solution $x$ of $(\HM)_{\rho'}$ with $\omega_1(\AD_{[0,0]}x)\le r_n$ satisfies: (a) $\omega_2(x(t,j))\le r_{n+1}/2$ for all $(t,j)\in\dom x$ and (b) there exists $(t,j)\in\dom x$ with $t+j\ge \tau_n$ such that $\omega_1(\AD_{[t,j]}x)\le r_{n-1}$, which in turn implies that $\omega_2(x(t,j))\le r_{n}/2$ for all $(t,j)\in\dom x$ with $t+j\ge\tau_n$. Define
$$
\beta'(r,s)=\sup\set{\omega_2(x(t,j)):\,x\in\mathcal{S}_{(\HM)_{\rho'}},\,\omega_1(\AD_{[0,0]}x)\le r,\,t+j\ge s}.
$$
In view of (a) and (b) above and following similar argument as in Lemma 7.11 of \cite{goebel2012hybrid}, it can be shown that $\beta'$ is a $\KL$-estimate for solutions of $(\HMD)_{\rho'}$. It follows that the set $\W$ is robustly $\KL$ pre-asymptotically stable for $\HMD$.
\end{proof}

\begin{lem}\label{lem:stab2}
Let $\HMD$ be a well-posed hybrid system with memory and $\W\subset\Real^n$ be a compact set. If $\W$ is $\KL$ pre-asymptotically stable, then it is semi-globally practically robustly $\KL$ pre-asymptotically stable.
\end{lem}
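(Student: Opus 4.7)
The plan is a proof by contradiction that parallels \cite[Lemma 7.20]{goebel2012hybrid} for hybrid systems without memory, with Theorem \ref{thm:well-posedness} and Proposition \ref{prop:lub} supplying the limiting machinery in the hybrid-memory setting.

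The central ingredient is a bounded-horizon version of the desired conclusion: for every $T>0$ and $\eps>0$, there exists $\delta>0$ such that every solution $x$ of $(\HMD)_{\delta\rho}$ with $\AD_{[0,0]}x\in\MD_b$ satisfies the practical bound (\ref{eq:practical}) on $\set{(t,j)\in\dom x : t+j\le T}$. I would establish this claim by contradiction. Assuming it fails, pick $\delta_i\ra 0$, solutions $x_i$ of $(\HMD)_{\delta_i\rho}$ with $\AD_{[0,0]}x_i\in\MD_b$, and $(t_i,j_i)\in\dom x_i$ with $t_i+j_i\le T$ such that
$$
\abs{x_i(t_i,j_i)}_\W \,>\, \beta\bigl(\norm{\AD_{[0,0]}x_i}_\W,\,t_i+j_i\bigr)+\eps.
$$
$\KL$ pre-asymptotic stability of $\HMD$ forces every solution to remain bounded, so $\HMD$ is pre-forward complete from each $\MD_{b',\lambda(b')}$; Proposition \ref{prop:lub} then gives local uniform boundedness of $\set{x_i}$ on $\set{t+j\le T+1}$. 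By compactness of $\MD_{b,\lambda(b)}$ (Proposition \ref{thm:compact}), pass to subsequences (without relabeling) with $\AD_{[0,0]}x_i\rap\phi\in\MD_{b,\lambda(b)}$, with $(t_i,j_i)\ra(t^*,j^*)$ (and $j_i=j^*$ eventually, by the discrete structure of the second coordinate), and with $\set{x_i}$ graphically convergent on the bounded horizon with limit $x$. Theorem \ref{thm:well-posedness}(a) identifies $x$ as a solution of $\HMD$ with $\AD_{[0,0]}x=\phi$ and $\dom x\cap\set{t+j\le T+1}=\lim_{i}(\dom x_i\cap\set{t+j\le T+1})$. The Lipschitz estimates inside the proof of Theorem \ref{thm:well-posedness} then yield uniform convergence on compact subsets of the interior of each flow interval, from which $x_i(t_i,j_i)\ra x(t^*,j^*)$, with jump times handled by slightly perturbing $t_i$ into the interior of the $j^*$-th flow piece and absorbing the $o(1)$ error via local Lipschitzness. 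Passing to the limit in the violation inequality, together with continuity of $\beta$ and of $\norm{\cdot}_\W$, yields $\abs{x(t^*,j^*)}_\W\ge\beta(\norm{\phi}_\W,t^*+j^*)+\eps$, contradicting the $\KL$ pre-asymptotic stability of $\HMD$.

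To conclude the lemma, choose $T$ so that $\beta(b+\sup_{w\in\W}\abs{w},T)\le\eps/2$, whence the $\KL$ bound for $\HMD$ gives $\abs{y(t,j)}_\W\le\eps/2$ for every solution $y$ of $\HMD$ with $\AD_{[0,0]}y\in\MD_b$ and $t+j\ge T$. Applying the bounded-horizon claim with this $T$ and practical accuracy $\eps/2$ yields a $\delta$ that handles the regime $t+j\le T$ for $(\HMD)_{\delta\rho}$; iterating the claim on successive time slabs, with a cascade in levels $r_n$ in the same spirit as the proof of Lemma \ref{lem:stab1}, extends the estimate to all $(t,j)\in\dom x$. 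The principal technical obstacle is the convergence $x_i(t_i,j_i)\ra x(t^*,j^*)$ at potential jump times, which the graphical-convergence framework of Section \ref{sec:pre} is precisely designed to handle and which is what makes the memory extension nontrivial compared to the non-memory case.
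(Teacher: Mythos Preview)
Your approach is essentially the paper's: a bounded-horizon contradiction argument using Proposition~\ref{prop:lub} and Theorem~\ref{thm:well-posedness}(a), followed by an iteration in time to obtain the global practical bound.

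One point deserves tightening. In the extension step you invoke ``a cascade in levels $r_n$ in the same spirit as the proof of Lemma~\ref{lem:stab1}.'' That is the wrong mechanism here: the shell cascade in Lemma~\ref{lem:stab1} produces \emph{different} $\delta_n$ for different shells, whereas the definition of semi-global practical robust $\KL$ stability requires a \emph{single} $\delta$ for the given $(b,\eps)$. The paper avoids this by running the bounded-horizon claim on the window $t+j\le 2T$ (not $T$), with $T\ge\Delta+1$ chosen so that $\beta(m,T)\le\eps/2$. The bound on $[0,2T]$ then forces $\omega_2(x(t,j))\le\eps$ on $[T,2T]$, and because $T\ge\Delta+1$ there is $(t',j')$ with $t'+j'\in[2T-1,2T]$ at which the \emph{entire memory arc} satisfies $\omega_1(\AD_{[t',j']}x)\le\eps\le m$. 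This puts the memory arc back in the same starting set, so the \emph{same} $\delta$ applies on the next slab, and a straightforward recursion gives $\omega_2(x(t,j))\le\eps$ for all $t+j\ge T$. Your horizon-$T$ version can be made to work the same way by enlarging $T$ so that $\beta(m,T-\Delta-1)\le\eps/2$, but you should drop the reference to the $r_n$ cascade and make explicit that the memory arc (not just the instantaneous state) returns to the starting set --- this is the memory-specific subtlety.
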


\begin{proof}
Fix $\eps>0$ and a continuous functional $\rho:\,\MD\ra\Real_{\ge 0}$ that is positive on $\MD\backslash\W$. Let $\beta$ be the $\KL$ function from the stability assumption on $\HMD$. For any given $b\ge 0$, there exists $m>\eps>0$ such that $\MD_{b}\subset\set{\phi\in\MD:\,\omega_1(\phi)\le m}$. For every fixed $T>0$, we show that there exists $\delta>0$ such that every solution $x$ to $(\HMD)_{\delta\rho}$ with $\omega_1(\AD_{[0,0]}x)\le m$ satisfies
\begin{equation}\label{eq:bound}
\omega_2(x(t,j))\le \beta(\omega_1(\AD_{[0,0]}x),t+j) + \eps/2,
\end{equation}
for all $(t,j)\in\dom x$ with $t+j\le 2T$.
Suppose this is not true. Then there exists a sequence $\delta_i\ra 0$ as $i\ra\infty$, a sequence $\set{x_i}_{i=1}^{\infty}$ of solutions to $(\HMD)_{\delta_i\rho}$ with $\omega_1(\AD_{[0,0]}x_i)\le m$, and a sequence of pairs $(t_i,j_i)\in\dom\phi_i$ with $t_i+j_i\le T$ such that
$
\omega_2(x_i(t_i,j_i))>\beta(\omega_1(\AD_{[0,0]}x_i),t_i+j_i) + \eps/2.
$
Without relabelling, extract a graphical convergent subsequence of $x_i$. It follows that $\AD_{[0,0]}x_i\rap \phi\in\MD_{b'}$ for some $b'>0$. Proposition \ref{prop:lub} implies that the sequence $x_i$ is locally eventually bounded. Well-posedness of $\HMD$ implies that the graphical limit $x$ of $x_i$ is a solution to $\HMD$. Without loss of generality, assume $(t_i,j_i)\ra (t^*,j^*)$ as $i\ra\infty$. It follows that $\omega_2(x(t^*,j^*))>\beta(\omega_1(\AD_{[0,0]}x),t^*+j^*)+\eps/2.
$ This violates that $\W$ is  pre-asymptotically stable for $\HMD$. Now we have shown (\ref{eq:bound}). Choose $T\ge\Delta+1$ sufficiently large such that $\beta(m,t+j)\le \eps/2$ for all $t+j\ge T$. It follows from  (\ref{eq:bound}) that
$
\omega_2(x(t,j))\le \eps
$
for all $t+j\in [T,2T]$ and $\omega_1(\AD_{[t',j']}x)\le \eps\le m$ for some $(t',j')\in\dom x$ with $t'+j'\in [2T-1,2T]$. By recursively using (\ref{eq:bound}), we can show that $\omega_2(x(t,j))\le \eps,
$
for all $(t,j)\in\dom x$ with $t+j\ge T$. This, combining with (\ref{eq:bound}), implies that the estimate required for the semi-global practical robust $\KL$ pre-asymptotical stability of $\HMD$ is satisfied.
\end{proof}

\begin{thm}[Robustness of pre-asymptotic stability]\label{thm:robustKL}
Let $\HMD$ be a well-posed hybrid system with memory and $\W\subset\Real^n$ be a compact set. If $\W$ is $\KL$ pre-asymptotically stable, then it is also robustly $\KL$ pre-asymptotically stable.
\end{thm}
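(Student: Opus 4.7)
The plan is to obtain the robustness statement as a direct composition of the two preceding lemmas. Since $\HMD$ is assumed to be well-posed and $\W$ is $\KL$ pre-asymptotically stable, Lemma \ref{lem:stab2} applies and yields that $\W$ is semi-globally practically robustly $\KL$ pre-asymptotically stable for $\HMD$. Then, without any additional hypothesis on the system, Lemma \ref{lem:stab1} applies to this conclusion and upgrades it to full robust $\KL$ pre-asymptotic stability of $\W$ for $\HMD$, which is exactly what we want.

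So the proof I would write is essentially two sentences: first invoke Lemma \ref{lem:stab2} using well-posedness and the $\KL$ pre-asymptotic stability assumption to pass to the semi-global practical robust version; then invoke Lemma \ref{lem:stab1} to pass from the semi-global practical robust version to the robust $\KL$ pre-asymptotic stability version. Both invocations are immediate from the hypotheses: well-posedness is used exactly once (in Lemma \ref{lem:stab2}, where it is needed to extract a graphically convergent subsequence of solutions to perturbed systems and identify its limit as an actual solution of $\HMD$), and no further use of well-posedness is required in the second step.

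There is no substantive obstacle to overcome at this point, as the two lemmas have already done all the analytic work — in particular, Lemma \ref{lem:stab2} has already handled the delicate approximation argument that uses well-posedness and the local uniform boundedness provided by Proposition \ref{prop:lub}, and Lemma \ref{lem:stab1} has already carried out the construction of the continuous gauge $\rho'$ on the annular decomposition $\{\mathcal{S}_n\}$ of $\MD$ and the assembly of the $\KL$-estimate $\beta'$. The only mildly nontrivial point in writing out the proof is to verify that the continuous functional $\rho:\MD\to\Real_{\ge 0}$ used in the definition of semi-global practical robust $\KL$ pre-asymptotic stability can be taken to be arbitrary, so that Lemma \ref{lem:stab1} may be applied with the gauge it needs. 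Inspecting Lemma \ref{lem:stab2} shows that it delivers the semi-global practical robustness estimate for \emph{every} such continuous $\rho$, so this is indeed at our disposal and the chain of implications closes cleanly. Thus the full proof reduces to: apply Lemma \ref{lem:stab2}, then apply Lemma \ref{lem:stab1}.
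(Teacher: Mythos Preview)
Your proposal is correct and matches the paper's own proof essentially verbatim: the paper simply states that the result follows from Lemmas \ref{lem:stab1} and \ref{lem:stab2}. Your additional commentary on why the chain of implications closes (in particular that Lemma \ref{lem:stab2} delivers the estimate for every continuous $\rho$, so Lemma \ref{lem:stab1} can be fed the gauge it needs) is a helpful elaboration but adds nothing beyond what the paper intends.
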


\begin{proof}
It follows from Lemmas \ref{lem:stab1} and \ref{lem:stab2}.
\end{proof}

\section{Conclusions}

{\color{black} In this paper, we have proposed a framework to study hybrid systems with delays via generalized concepts of solutions. These solutions are defined on hybrid time domains and parameterized by both the real time and the number of jumps. We have proved an existence theorem for hybrid functional inclusions and a well-posedness result related to robust stability issues for hybrid systems with delays. These results provide a theoretical foundation for the development of a robust stability theory of hybrid systems with delays using generalized solutions \cite{liu2016lyapunov,liu2016invariance,liu2016razumikhin}. 

We would like to emphasize the main contributions of the paper as follows. Previous work on hybrid systems with delays has mostly been built on the uniform convergence topology to study existence of solutions and stability issues. For this reason, discontinuities caused by jumps in hybrid systems are not well handled, especially when structural properties of the solutions are concerned. Using generalized concepts of solutions, we have been able to develop several results that are not available in the literature: invariance principles \cite{liu2016invariance} and sufficient conditions for robust stability \cite{liu2016lyapunov} for hybrid systems with delays. In fact, one of the main results achieved in this paper states that pre-asymptotic stability for well-posed hybrid systems with memory is robust. We believe that this framework can effectively unify studies on discrete-time systems and continuous-time systems with delays. }

\bibliographystyle{siam}
\bibliography{hybrid}

\appendix

\newpage 

\section{Relationship among different measures of distance in $\MD$}\label{append:distance}

The following proposition shows how the uniform distance, $(\tau,\eps)$-closeness, and $(\rho,\eps)$-closeness can be used to provide an estimate of the graphical distance, or vice versa. The proof is essentially based on relations of the distance functions $\d$, $\d_\rho$, and $d(z,H)$ as established in \cite[Lemma 4.34 and Proposition 4.37]{rockafellar1998variational}.

{ \begin{prop}\label{dprop}
Consider two hybrid memory arcs $\phi,\,\psi\in\MD$. The following two statements hold:
\begin{enumerate}[(a)]
\item If $\dom\phi=\dom\psi$, then
\begin{equation}\label{uniform}
\d(\phi,\psi)\le\norm{\phi-\psi}.
\end{equation}
\item If $\widehat{\d}_{\rho}(\phi,\psi)\le \eps$ for some $\eps\ge0$ and $\rho\ge 2\bar{\rho}+m$, where $m=\max\set{d(0,\gph\phi),d(0,\gph\psi)}$ and $\bar{\rho}\ge 0$, then
\begin{equation}\label{rhoeps}
\d(\phi,\psi)\le \eps(1-e^{-\bar{\rho}})+(\bar{\rho}+m+1)e^{-\bar{\rho}}.
\end{equation}
In particular,
\begin{equation}\label{eps}
\widehat{\d}_{\rho}(\phi,\psi)\le \eps,\; \forall \rho\ge 0 \quad\Longrightarrow\quad \d(\phi,\psi)\le \eps.
\end{equation}
On the other hand, if $\d(\phi,\psi)\le\delta$ for some $\delta\ge 0$, then the graphs of $\widehat{\d}_{\rho}(\phi,\psi)\le\delta e^{\rho}$ for all $\rho\ge0$.
\item If $\widetilde{\d}_{\tau}(\phi,\psi)\le \eps$ for some $\eps\ge0$ and $\tau\ge 2\bar{\rho}+m$, where $m$ is as defined in (b) and $\bar{\rho}\ge 0$, then
\begin{equation}\label{taueps}
\d(\phi,\psi)\le \sqrt{2}\eps(1-e^{-\bar{\rho}})+(\bar{\rho}+m+1)e^{-\bar{\rho}}.
\end{equation}
In particular,
{ \begin{equation}\label{eps2}
\widetilde{\d}_{\tau}(\phi,\psi)\le \eps,\;\forall\tau\ge 0 \quad\Longrightarrow\quad \d(\phi,\psi)\le \sqrt{2}\eps.
\end{equation}}
On the other hand, if $\d(\phi,\psi)\le\delta$ for some $\delta\ge 0$, then $\widetilde{\d}_{\tau}(\phi,\psi)\le\delta e^{\rho}$, provided that
\begin{align}
\tau\ge0,\quad h=\max\Big\{\sup_{\abs{t+j}\le\tau}\abs{\phi(t,j)},\sup_{\abs{t+j}\le\tau}\abs{\phi(t,j)}\Big\},\quad \rho=\sqrt{h^2+\tau^2}, \quad\delta e^\rho<1.\label{thdr}
\end{align}
\end{enumerate}
\end{prop}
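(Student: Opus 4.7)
The plan is to handle the three parts separately, building on the basic identities from Rockafellar-Wets \cite[Lemma 4.34 and Proposition 4.37]{rockafellar1998variational} that express the integrated set distance $\d$ in terms of pointwise-in-$\rho$ truncations $\d_\rho$ together with the crude a-priori bound $d(z,A)\le |z|+d(0,A)$.

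For part (a), since $\dom\phi=\dom\psi$, the map $(s,k,\phi(s,k))\mapsto (s,k,\psi(s,k))$ is a bijection between $\gph\phi$ and $\gph\psi$ that moves each point by at most $\|\phi-\psi\|$. Therefore $\d_\rho(\phi,\psi)\le\|\phi-\psi\|$ for every $\rho\ge0$ and integrating against $e^{-\rho}\,d\rho$ over $[0,\infty)$ gives (\ref{uniform}) since $\int_0^\infty e^{-\rho}d\rho=1$.

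For part (b), I would split $\d(\phi,\psi)=\int_0^{\bar\rho}\d_\rho(\phi,\psi)e^{-\rho}d\rho+\int_{\bar\rho}^\infty\d_\rho(\phi,\psi)e^{-\rho}d\rho$ and bound each piece. On the tail $[\bar\rho,\infty)$ the trivial estimate $\d_\rho(\phi,\psi)\le \rho+m$ (obtained from $|d(z,A)-d(z,B)|\le\max(d(z,A),d(z,B))\le|z|+m$) combined with $\int_{\bar\rho}^\infty(\rho+m)e^{-\rho}d\rho=(\bar\rho+m+1)e^{-\bar\rho}$ supplies the second summand in (\ref{rhoeps}). On $[0,\bar\rho]$, I claim $\d_\rho(\phi,\psi)\le\eps$: given $|z|\le\bar\rho$, any nearest point $w\in\gph\phi$ to $z$ satisfies $|w|\le|z|+d(z,\gph\phi)\le\bar\rho+(|z|+d(0,\gph\phi))\le 2\bar\rho+m\le\rho$, so $w\in\gph\phi\cap\rho\B$; the hypothesis $\widehat{\d}_\rho(\phi,\psi)\le\eps$ then supplies $w'\in\gph\psi$ with $|w-w'|\le\eps$, yielding $d(z,\gph\psi)\le d(z,\gph\phi)+\eps$, and symmetry gives $\d_\rho(\phi,\psi)\le\eps$. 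Integrating gives $\eps(1-e^{-\bar\rho})$ and finishes (\ref{rhoeps}); letting $\bar\rho\to\infty$ in (\ref{rhoeps}) yields (\ref{eps}). For the converse direction, monotonicity of $\d_\rho$ in $\rho$ gives $\d(\phi,\psi)\ge\d_{\rho}(\phi,\psi)\int_{\rho}^\infty e^{-s}ds=\d_\rho(\phi,\psi)e^{-\rho}$, so $\d(\phi,\psi)\le\delta$ implies $\d_\rho(\phi,\psi)\le\delta e^\rho$, and the standard containment observation $d(w,B)\le\d_\rho(A,B)$ for $w\in A\cap\rho\B$ converts this to $\widehat{\d}_\rho(\phi,\psi)\le\delta e^\rho$.

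For part (c), the strategy is to reduce to (b) by showing that $(\tau,\eps)$-closeness upgrades to graph $(\rho,\sqrt{2}\eps)$-closeness for every $\rho\le\bar\rho$ when $\tau\ge 2\bar\rho+m$. The key observation is that if $w=(t,j,x)\in\gph\phi\cap\rho\B$ with $\rho\le 2\bar\rho+m$, then $|t|,|j|$ are each bounded by $|w|$, and since $(t,j)$ have the same sign in a hybrid time domain, $|t+j|=|t|+|j|$; choosing $\bar\rho$ so that the implied bound on $|t+j|$ does not exceed $\tau$ allows invocation of the $(\tau,\eps)$-closeness to produce $(s,j,y)\in\gph\psi$ with $|t-s|\le\eps$ and $|x-y|\le\eps$, hence $|w-(s,j,y)|\le\sqrt{2}\eps$. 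The rest of the proof then mirrors part (b), with $\eps$ replaced by $\sqrt{2}\eps$, yielding (\ref{taueps}) and (\ref{eps2}). The converse direction proceeds by first converting $\d(\phi,\psi)\le\delta$ into $\widehat{\d}_\rho(\phi,\psi)\le\delta e^\rho$ as in (b), and then noting that a point $(t,j,x)\in\gph\phi$ with $|t+j|\le\tau$ lies in $\rho\B$ for $\rho=\sqrt{h^2+\tau^2}$ as in (\ref{thdr}), so the graph containment at level $\delta e^\rho<1$ furnishes a nearby point $(s,j',y)\in\gph\psi$; the discreteness of $j$ together with $\delta e^\rho<1$ forces $j'=j$, giving the required $(\tau,\delta e^\rho)$-closeness.

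The main obstacle is the bookkeeping of the three parameters $\bar\rho$, $m$, $\tau$ in part (c): one must track how an $n+2$-dimensional Euclidean ball on the graph translates into the hybrid-time condition $|t+j|\le\tau$, and simultaneously handle the factor of $\sqrt{2}$ arising because $(\tau,\eps)$-closeness controls the $t$ and $x$ coordinates independently rather than jointly. Once this geometric reduction is in place, the rest is the same truncation-and-integration argument used for part (b).
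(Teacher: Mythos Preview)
Your proposal is correct and follows essentially the same approach as the paper's proof: the paper delegates the core estimates in (b) to \cite[Lemmas 4.34 and 4.41]{rockafellar1998variational} whereas you unpack those lemmas inline via the split-integral argument, and both proofs handle (c) by reducing to (b) through the observation that $(\tau,\eps)$-closeness implies graph $(\cdot,\sqrt{2}\eps)$-closeness. The minor parameter slippage in your sketch of (c) (writing ``$\rho\le\bar\rho$'' and then ``$\rho\le 2\bar\rho+m$'', and ``choosing $\bar\rho$'' when $\bar\rho$ is given) is cosmetic and does not affect the argument.
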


\begin{proof}
(a) Given $z\in\Real^{n+2}$, since $\gph\psi$ is closed, we can find $(t,j)\in \dom\,\psi$ such that
$d(z,\gph \psi)=\abs{z-(t,j,\psi(t,j))}.$
On the other hand, $d(z,\gph \phi)\le\abs{z-(t,j,\phi(t,j))}.$ Therefore,
\begin{align*}
&d(z,\gph \phi)-d(z,\gph \psi)\\
&\quad\le\abs{z-(t,j,\phi(t,j))}-\abs{z-(t,j,\psi(t,j))}\le\norm{\phi-\psi}.
\end{align*}
By symmetry of $\phi$ and $\psi$, we have
$\abs{d(z,\gph \phi)-d(z,\gph \psi)}\le\norm{\phi-\psi}$ for all $z\in\Real^{n+2}$. By definition, we have $\d_{\rho}(\phi,\psi)\le \norm{\phi-\psi}$ for all $\rho\ge 0$, which implies (\ref{uniform}).

(b) It follows from Lemma 4.34(b) of \cite{rockafellar1998variational} that
$$\d_{\bar{\rho}}(\phi,\psi)\le \eps.$$
Equation (\ref{rhoeps}) is a direct application of Lemma 4.41(b) of \cite{rockafellar1998variational}. The particular case (\ref{eps}) follows from letting $\rho\ra\infty$ and $\bar{\rho}\ra\infty$ on the right-hand side of (\ref{rhoeps}).

On the other hand, if $\d(\phi,\psi)\le\delta$, it follows that $\d_{\rho}(\phi,\psi)\le e^{\rho}\delta$ for all $\rho\ge 0$. Otherwise we will have $\d_{r}(\phi,\psi)> e^{r}\delta$ for some $r>0$ and then
$$
\d(\phi,\psi)\ge\int_{r}^{\infty}\d_{r}(\phi,\psi)e^{-\rho}d\rho>e^{r}\delta\int_{r}^{\infty}e^{-\rho}d\rho=\delta,
$$
which is a contradiction. By Lemma 4.34(a) of \cite{rockafellar1998variational}, the graphs of $\phi$ and $\psi$ are $(\rho,e^{\rho}\delta)$-close.

(c) The proof for (\ref{taueps}) follows from part (b) and the fact that if $\phi$ and $\psi$ are $(\tau,\eps)$-close, then their graphs are $(\tau,\sqrt{2}\eps)$-close. The particular case (\ref{eps2}) follows from letting $\tau\ra\infty$ and $\bar{\rho}\ra\infty$ on the right-hand side of (\ref{eps2}).

On the other hand, if $\d(\phi,\psi)\le\delta$, it follows from the second half of (b) that the graphs of $\phi$ and $\psi$ are $(\rho,e^{\rho}\delta)$-close for all $\rho\ge 0$. Let $(\tau,h,\delta,\rho)$ satisfy the conditions in (\ref{thdr}). For each $(t,j)\in\dom\phi$ with $\abs{t+j}\le\tau$, we have $(t,j,\phi(t,j))\in \gph\phi\cap\rho\B$. It follows that $(t,j,\phi(t,j))\in \gph\psi+e^{\rho}\delta\B$. That is, there exists $(t',j',\psi(t',j'))$ such that $\abs{(t',j',\psi(t',j'))-(t,j,\phi(t,j))}\le e^{\rho}\delta\B$. Since $e^{\rho}\delta<1$, we have $j=j'$, $\abs{t-t'}\le e^{\rho}\delta$, and $\abs{\phi(t,j)-\psi(t',j)}$. Similarly, for each $(t,j)\in\dom\psi$ with $\abs{t+j}\le\tau$, we can find $(t',j)$ such that $\abs{t-t'}\le e^{\rho}\delta$, and $\abs{\psi(t,j)-\phi(t',j)}$. This verifies that $\phi$ and $\psi$ are $(\tau,\delta e^{\rho})$-close.
\end{proof}

\section{Lemmas for Theorem \ref{thm:ext}}

{\color{black} The following two lemmas are used in the proof of Theorem \ref{thm:ext}. Lemma \ref{lem:tri} serves as a triangle inequality for estimating the distances between hybrid memory arcs using the distance $\dt$, whereas Lemma \ref{lem:cont} proves certain continuity properties of $\AD_{[t,j]}x$ with respect to $t$ for a given a hybrid arc $x$. }

\begin{lem}\label{lem:tri}
Let $\phi_i\in \MD$ ($i=1,2,3$) satisfy $\dt_{\tau_1}(\phi_1,\phi_2)\le \eps_1$ and
$\dt_{\tau_2}(\phi_2,\phi_3)\le \eps_2$ with $\tau_1\ge\eps$ and $\tau_2\ge\eps_1$. Then $\dt_{\tau}(\phi_1,\phi_3)\le \eps_1+\eps_2$, where $\tau=\min(\tau_1-\eps_2,\tau_2-\eps_1)$.
\end{lem}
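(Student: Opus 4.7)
The plan is a straightforward two-step triangle-inequality argument, using the intermediate hybrid memory arc $\phi_2$ as the bridge between $\phi_1$ and $\phi_3$. I first unpack the definition of $(\tau,\eps)$-closeness recalled in Section~\ref{app:distance}: to verify $\dt_\tau(\phi_1,\phi_3)\le\eps_1+\eps_2$, I must check the two symmetric conditions of the definition, namely that each point in $\dom\phi_1$ (respectively $\dom\phi_3$) with $|t+j|\le\tau$ can be matched with a nearby point in $\dom\phi_3$ (respectively $\dom\phi_1$) with the required tolerances in both time and value.

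I would begin with the first direction. Fix any $(t,j)\in\dom\phi_1$ with $|t+j|\le\tau$. Since $\tau\le\tau_1-\eps_2\le\tau_1$, I have $|t+j|\le\tau_1$, so the hypothesis $\dt_{\tau_1}(\phi_1,\phi_2)\le\eps_1$ furnishes $s'$ with $(s',j)\in\dom\phi_2$, $|t-s'|\le\eps_1$, and $|\phi_1(t,j)-\phi_2(s',j)|\le\eps_1$. The next step is the key bookkeeping point: I need $|s'+j|\le\tau_2$ so that the second closeness hypothesis applies at $(s',j)$. This follows from
\[
|s'+j|\;\le\;|s'-t|+|t+j|\;\le\;\eps_1+\tau\;\le\;\eps_1+(\tau_2-\eps_1)\;=\;\tau_2,
\]
which is exactly where the definition $\tau=\min(\tau_1-\eps_2,\tau_2-\eps_1)$ is used. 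Then $\dt_{\tau_2}(\phi_2,\phi_3)\le\eps_2$ yields $s$ with $(s,j)\in\dom\phi_3$, $|s'-s|\le\eps_2$, and $|\phi_2(s',j)-\phi_3(s,j)|\le\eps_2$. Adding the two chains of inequalities gives $|t-s|\le\eps_1+\eps_2$ and $|\phi_1(t,j)-\phi_3(s,j)|\le\eps_1+\eps_2$, which is what the first half of $(\tau,\eps_1+\eps_2)$-closeness requires.

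The reverse direction is completely symmetric: start from $(t,j)\in\dom\phi_3$ with $|t+j|\le\tau\le\tau_2$, apply $\dt_{\tau_2}(\phi_2,\phi_3)\le\eps_2$ to obtain an intermediate $(s',j)\in\dom\phi_2$, verify $|s'+j|\le\eps_2+\tau\le\eps_2+(\tau_1-\eps_2)=\tau_1$ using the other half of the definition of $\tau$, and then invoke $\dt_{\tau_1}(\phi_1,\phi_2)\le\eps_1$ at $(s',j)$ to reach $\phi_1$. The same triangle inequality produces the bound $\eps_1+\eps_2$ on both time and value discrepancies.

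The only genuinely non-routine part is the index-tracking step that certifies that each intermediate point $(s',j)$ lies within the reach of the next closeness hypothesis; this is precisely what forces the truncation $\tau=\min(\tau_1-\eps_2,\tau_2-\eps_1)$ rather than $\min(\tau_1,\tau_2)$, and is the reason the hypotheses $\tau_1\ge\eps_2$ and $\tau_2\ge\eps_1$ are assumed (so that $\tau\ge 0$ and the statement is non-vacuous). No other technical obstacle is expected.
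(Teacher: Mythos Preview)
Your proposal is correct and follows essentially the same two-step triangle-inequality argument as the paper's own proof: pass through the intermediate arc $\phi_2$, and use the choice $\tau=\min(\tau_1-\eps_2,\tau_2-\eps_1)$ precisely to guarantee the intermediate point lies within reach of the second closeness hypothesis. Your write-up is in fact slightly more careful than the paper's, which leaves the verification $\tau\le\tau_1$ implicit in the first step.
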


\begin{proof}
Fix any $(s,k)\in\dom\phi_1$ with $\abs{s+k}\le\tau\le \tau_2-\eps_1$. Since $\phi_1$ and $\phi_2$ are $(\tau_1,\eps_1)$-close, there exists $(s',k)\in\dom\phi_2$ such that $\abs{\phi_2(s',k)-\phi_1(s,k)}\le \eps_1$ and $\abs{s'-s}\le \eps_1$. We have $\abs{s'+k}\le \abs{s+k}+\eps_1\le \tau_2$. Since $\phi_2$ and $\phi_3$ are $(\tau_2,\eps_2)$-close, there exists $(s'',k)\in\dom\phi_3$ such that
$\abs{s''-s'}\le \eps_2$ and $\abs{\phi_2(s'',k)-\phi_1(s',k)}\le \eps_1$. It follows that $\abs{s''-s}\le \eps_1+\eps_2$ and $\abs{\phi_2(s'',k)-\phi_1(s,k)}\le \eps_1+\eps_2$. Similarly, we can show that for any $(s,k)\in\dom\phi_3$, there exists $(s'',k)\in\dom\phi_1$ such that $\abs{s''-s}\le \eps_1+\eps_2$ and $\abs{\phi_2(s'',k)-\phi_1(s,k)}\le \eps_1+\eps_2$. This completes the proof.
\end{proof}

\begin{lem}\label{lem:cont}
Let $x$ be a hybrid arc with memory. For each $j\in \Z$ such that $I^j$ has nonempty interior, there exists a finite subset $\Theta\subset I^j$ such that the function $\alpha:\,I^j\ra\MD$, defined by $\alpha(t):=\AD_{[t,j]}x$, is uniformly continuous on each compact subinterval $U$ of $I^j\backslash\Theta$. 
Moreover, if $\Delta=\infty$, then $\alpha$ is uniformly continuous on each compact subinterval $U$ of $I^j$.
\end{lem}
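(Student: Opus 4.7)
\textbf{Proof plan for Lemma \ref{lem:cont}.} My plan is to identify the finite exceptional set $\Theta$ explicitly from the jump structure of $\dom x$ in the memory window, to show that the far-past cut-off used to build $\alpha(t):=\AD_{[t,j]}x$ varies Lipschitz continuously on $I^j\setminus\Theta$, and then to convert pointwise continuity to uniform continuity on compact subintervals via the $(\tau,\eps)$-closeness estimate of Proposition \ref{dprop}(c).

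First I unpack the definition. Write $\dom x=\bigcup_k [t_k,t_{k+1}]\times\{k\}$ and let $H:=\{s'+k':(s',k')\in\dom x\}=\bigcup_k[t_k+k,\,t_{k+1}+k]$ be the set of hybrid-time values attained by $\dom x$; each real jump $t_{k+1}$ contributes a gap $(t_{k+1}+k,\,t_{k+1}+(k+1))$ of length exactly $1$ in $H$. A direct computation from (\ref{eq:domA})--(\ref{eq:deltainf}) shows that the far-past hybrid time used to truncate $\AD_{[t,j]}x$ is
$$
c(t)\;:=\;t+j-\Delta_{\inf}(t)\;=\;\sup\{h\in H:h\le t+j-\Delta\}.
$$
Thus $c(t)=t+j-\Delta$ while $t+j-\Delta$ lies in a closed interval of $H$, and $c(t)=t_{k+1}+k$ while $t+j-\Delta$ lies in the gap $(t_{k+1}+k,\,t_{k+1}+(k+1))$; the only discontinuities of $c$ are upward jumps of size exactly $1$ occurring at the instants $t+j-\Delta$ exits a gap, i.e.\ at $t=t_{k+1}+(k+1)+\Delta-j$.

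Next I define $\Theta$. By Definition \ref{def:hybridarc} the slice of $\dom x$ in hybrid time $[\inf I^j+j-\Delta-1,\,\sup I^j+j]$ is a compact hybrid time domain with memory, hence contains only finitely many jumps; therefore only finitely many indices $k$ produce an exceptional time in $I^j$. Let $\Theta\subset I^j$ denote the finite collection of these times $t_{k+1}+(k+1)+\Delta-j$. On each connected component of $I^j\setminus\Theta$, $c$ is piecewise affine with slope in $\{0,1\}$, hence $1$-Lipschitz.

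Now I upgrade to uniform continuity. Fix a compact subinterval $U\subset I^j\setminus\Theta$. By compactness $U$ is bounded away from $\Theta$, so $c$ (hence $\Delta_{\inf}$) is uniformly $1$-Lipschitz on $U$, and every memory window attached to $t\in U$ lies inside one fixed bounded hybrid-time region of $\dom x$ that meets only finitely many levels $k'\le j$. On each such level $x(\cdot,k')$ is absolutely continuous on a compact real interval, so uniformly continuous; let $\omega$ be a common modulus for these finitely many levels. For $t,t'\in U$ the shift $(s,k)\mapsto(s+(t-t'),k)$ gives a bijection between $\dom(\AD_{[t,j]}x)$ and $\dom(\AD_{[t',j]}x)$ up to a correction near the far-past boundary of hybrid-time size $|c(t)-c(t')|\le|t-t'|$; combined with $|x(t+s,j+k)-x(t'+s,j+k)|\le\omega(|t-t'|)$ this yields, for every $\tau\ge 0$,
$$
\widetilde{\d}_{\tau}\bigl(\alpha(t),\alpha(t')\bigr)\;\le\;\omega(|t-t'|)+|t-t'|.
$$
Proposition \ref{dprop}(c) then produces $\d(\alpha(t),\alpha(t'))\le\sqrt 2\bigl(\omega(|t-t'|)+|t-t'|\bigr)$, which is the desired uniform continuity on $U$. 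When $\Delta=\infty$ the constraint $\delta\ge\Delta$ in (\ref{eq:deltainf}) forces no truncation at all, so $\Theta=\emptyset$ and $|c(t)-c(t')|\equiv 0$; the same estimate then gives uniform continuity on every compact $U\subset I^j$, establishing the \emph{moreover} clause.

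The main obstacle is the analysis of the cut-off $c(t)$: one has to locate precisely where and by how much $c$ can jump, and check that these jumps are both isolated and finite in number on $I^j$. Once that bookkeeping is in place, the remainder reduces to a routine uniform-continuity argument on a compact set, packaged through the $(\tau,\eps)$-closeness machinery of Section \ref{app:distance}.
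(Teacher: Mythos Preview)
Your proof is correct and follows essentially the same approach as the paper: define $\Theta$ (your description as the times $t_{k+1}+(k+1)+\Delta-j$ coincides with the paper's characterization via jumps at relative hybrid time $-\Delta-1$), establish $(\tau,\eps)$-closeness of $\AD_{[t,j]}x$ and $\AD_{[t',j]}x$ for all $\tau$ through the time-shift $s\mapsto s+(t-t')$, and then invoke Proposition~\ref{dprop}(c) to obtain the graphical-distance estimate. Your treatment is actually more detailed than the paper's---you make the cut-off function $c(t)$ and its piecewise-affine, $1$-Lipschitz behavior on $I^j\setminus\Theta$ explicit, and you correctly invoke a common modulus of continuity across all finitely many levels in the memory window, whereas the paper's proof is quite terse on these points.
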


\begin{proof}
Choose $\Theta$ to be the subset of $I^j$ consisting of all $t\in I^j$ for which there exists $(s,k)\in\Real_{\le 0}\times\Z_{\le 0}$ such that $s+k=-\Delta-1$, $(t+s,j+k)\in\dom x$ and $(t+s,j+k+1)\in\dom x$. Note that if $\Delta=\infty$, then $\Theta=\emptyset$. 
By continuity of $x(\cdot,j)$ on $I^j$, we know that $x(\cdot,j)$ is uniformly continuous on $U$. Given any $\eps>0$, we choose $0<\delta\le\eps/\sqrt{2}$ such that $\abs{t'-t''}\le \delta$ and $t',t''\in U$ implies
$\abs{x(t',j)-x(t'',j)}\le\eps/\sqrt{2}.$
Consider $\phi_1=\AD_{[t',j]}x$ and $\phi_2=\AD_{[t'',j]}x$, where $\abs{t'-t''}\le \delta$ and $t',t''\in U$. We have $\widetilde{\d}_{\tau}(\phi,\psi)\le \eps/\sqrt{2}$ for all $\tau\ge 0$.   
It follows from (\ref{eps2}) in Proposition \ref{dprop}(c) that $\d(\phi_1,\phi_2)\le\eps$. This proves that $t\mapsto\AD_{[t,j]}x$ is uniformly continuous in $t$ on $U$.  
\end{proof}

\section{Lemmas for Theorem \ref{thm:well-posedness}}\label{app:lemmas}

To prove Theorem \ref{thm:well-posedness}, the following lemma on graphical convergence of hybrid memory arcs induced by graphically convergent solutions is used. 

\begin{lem}\label{lem:conv}
Suppose a hybrid system with memory $\HMD=(\C,\F,\D,\G)$ satisfies Assumption \ref{as1}.
Let $\set{z_i}_{i=1}^{\infty}$ be a graphically convergent sequence of solutions to $(\HMD)_\rho$ with limit $z$. If $\set{z_i}_{i=1}^{\infty}$ is locally eventually bounded, then the following hold:
\begin{enumerate}
\item $\set{\AD_{[t,j]}z_i}_{i=1}^{\infty}$ graphically converges to $\AD_{[t,j]}z$ for each $t\in \text{int}(I_j)$;
\item if $(t,j)\in \dom z$ and $(t,j+1)\in \dom z$, there exists a sequence $\set{s_i}_{i=1}^{\infty}$ such that the following hold simultaneously: $(s_i,j)\in \dom(z_i)$, $(s_i,j+1)\in \dom(z_i)$, $\lim_{i\ra\infty}s_i=t$, $\lim_{i\ra\infty}z_i(s_i,j)=z(t,j)$, $\lim_{i\ra\infty}z_i(s_i,j+1)=z(t,j+1)$,  $\limg_{i\ra\infty}\AD_{[s_i,j]}z_i=\AD_{[t,j]}z,$ and $\limg_{i\ra\infty}\AD_{[s_i,j+1]}z_i=\AD_{[s_i,j+1]}z.$
\end{enumerate}
\end{lem}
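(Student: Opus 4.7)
The plan is to leverage two key consequences of local eventual boundedness: uniform Lipschitz control on flow components, and set convergence of the hybrid time domains. Since $\F_\rho$ is locally bounded (via Assumption \ref{as1}(A2) together with continuity of $\rho$), local eventual boundedness of $\set{z_i}$ implies that, on any compact region of the $(t,k)$-plane, the derivatives $\set{\dot z_i(\cdot, k)}$ are eventually bounded by a uniform constant $L$, so each $z_i(\cdot, k)$ is uniformly $L$-Lipschitz on the relevant flow intervals. Combined with graphical convergence of $z_i$ to $z$, an Arzelà--Ascoli-type argument analogous to \cite[Lemma 5.28]{goebel2012hybrid} (also used in the proof of Proposition \ref{thm:compact}) yields that $z_i(\cdot, k)$ converges uniformly to $z(\cdot, k)$ on every compact subinterval of $\mathrm{int}(I^k)$, and that $\dom z_i$ converges to $\dom z$ as sets with the combinatorial domain pattern near any fixed hybrid time stabilizing for sufficiently large $i$.

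For (1), fix $t \in \mathrm{int}(I^j)$. The eventual stabilization of the hybrid time-domain structure near $(t,j)$ implies that, for $i$ large, $\dom(\AD_{[t,j]}z_i)$ consists of the same number of closed intervals as $\dom(\AD_{[t,j]}z)$, with matching sign-of-index labels and interval endpoints converging to those of the limit. This takes care in particular of the cutoff $\Delta_{\inf}$ in \eqref{eq:deltainf}, which stabilizes to the value realized by $z$. Combining the uniform convergence of each flow piece $z_i(\cdot, j+k)$ within the memory window (restricted to compact subsets of the interiors, with Lipschitz extension up to the endpoints) with the convergence of interval endpoints produces set convergence of the full graphs, i.e. $\AD_{[t,j]}z_i \rap \AD_{[t,j]}z$.

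For (2), the hypothesis $(t, j), (t, j+1) \in \dom z$ means $t$ is the shared endpoint at which $I^j_z$ ends and $I^{j+1}_z$ begins. Exploiting the stabilization of the hybrid time-domain structure of $z_i$ about $t$, there exists for every sufficiently large $i$ a unique jump time $s_i$ such that $(s_i, j) \in \dom z_i$ is the right endpoint of $I^j_{z_i}$ and $(s_i, j+1) \in \dom z_i$ is the left endpoint of $I^{j+1}_{z_i}$, with $s_i \to t$. The uniform Lipschitz bound extends to the closed flow intervals, so that $z_i(s_i, j) \to z(t, j)$ and $z_i(s_i, j+1) \to z(t, j+1)$ follow by combining uniform convergence on compact subsets of the interior with a triangle-inequality estimate controlled by $L|s_i - t|$. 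The graphical convergences $\AD_{[s_i, j]}z_i \rap \AD_{[t, j]}z$ and $\AD_{[s_i, j+1]}z_i \rap \AD_{[t, j+1]}z$ are obtained by a sequential version of the argument in (1), where the basepoint is allowed to vary along $s_i \to t$; uniform Lipschitz control plus a standard diagonal extraction transfers the graph convergence to the moving basepoint.

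The hard part will be the sequential version of (1) needed at the end of (2): the basepoint is now a sequence $s_i$ tending to the endpoint of $I^j$ rather than a fixed interior point, so the convergence of $\dom(\AD_{[s_i,j]}z_i)$ to $\dom(\AD_{[t,j]}z)$ requires careful bookkeeping. Specifically, one must verify that the cutoffs $\Delta_{\inf}(s_i, j)$ and their realizers behave continuously along the approaching sequence, and that uniform convergence on interior compact subsets can be combined with Lipschitz control at the endpoints to yield set convergence of the graphs. Secondary issues --- existence and uniqueness of the common jump time $s_i$, and extension of the Arzelà--Ascoli estimates up to the boundary of each flow interval --- are comparatively routine once the combinatorial stabilization of $\dom z_i$ has been secured.
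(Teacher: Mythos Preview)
Your proposal is correct and follows essentially the same route as the paper: both obtain uniform Lipschitz control from local boundedness of $\F_\rho$, invoke the Arzel\`a--Ascoli-type arguments from \cite[Example~5.19 and Lemma~5.28]{goebel2012hybrid} to get domain convergence and uniform convergence on compact subintervals, and then decompose $\gph \AD_{[t,j]}z_i$ into finitely many shifted flow pieces whose individual set-convergence yields convergence of the union. The paper's only additional ingredients are a dedicated set-distance lemma (Lemma~\ref{lem:setdist}) providing $\d(\cup_k A_k,\cup_k B_k)\le\sum_k\d(A_k,B_k)$ and a translation estimate, which makes your ``combining'' step quantitative, and a slightly different construction of $s_i$ in part~(2): rather than asserting a unique jump time, the paper extracts sequences $t_i',t_i''\to t$ from the inner-limit property at $(t,j)$ and $(t,j+1)$ and chooses $s_i\in[t_i',t_i'']$ with both $(s_i,j),(s_i,j+1)\in\dom z_i$, after which no diagonal extraction is needed.
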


To prove Lemma \ref{lem:conv}, we need the following lemma on set distances.

\begin{lem}\label{lem:setdist}
Let $\d$ denote the (integrated) set distance between closed subsets of $\Real^n$. We have the following.
\begin{enumerate}
\item $\d(\cup_{i=1}^{N}A_i,\cup_{i=1}^{N}B_i)\le \sum_{i=1}^N\d(A_i,B_i)$ for closed sets $A_i,\,B_i\subset\Real^n$ ($1\le i\le N$);
\item $\d(A+x,B+y)=\d(A+x,B+y)=\d(A,B)e^{\min(\abs{x},\abs{y})}+\abs{x-y}$ for closed sets $A,\,B\subset\Real^n$ and $x,y\in\Real^n$.
\end{enumerate}
\end{lem}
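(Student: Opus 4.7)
The plan is to reduce both claims to pointwise estimates of the distance function $d(z,\cdot)$ on $\Real^n$ and then integrate against the weight $e^{-\rho}$. All the work is done at the level of $\d_\rho$; the passage to $\d$ is routine.

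For part 1, the key identity is $d(z,\bigcup_{i=1}^{N}A_i)=\min_{1\le i\le N}d(z,A_i)$ (valid since the $A_i$ are closed), together with the elementary inequality $|\min_i a_i-\min_i b_i|\le\max_i|a_i-b_i|$ for real numbers: if $i^{*}$ achieves $\min_i b_i$, then $\min_i a_i-\min_i b_i\le a_{i^{*}}-b_{i^{*}}\le\max_i|a_i-b_i|$, and the reverse follows by symmetry. Applied with $a_i=d(z,A_i)$ and $b_i=d(z,B_i)$ this gives, pointwise in $z$,
\begin{equation*}
\bigl|d(z,\textstyle\bigcup_i A_i)-d(z,\bigcup_i B_i)\bigr|\le\max_{1\le i\le N}|d(z,A_i)-d(z,B_i)|.
\end{equation*}
Taking the supremum over $|z|\le\rho$ and then dominating the max by the sum, I obtain $\d_\rho(\bigcup_i A_i,\bigcup_i B_i)\le\sum_{i=1}^{N}\d_\rho(A_i,B_i)$. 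Multiplying by $e^{-\rho}$ and integrating on $[0,\infty)$ yields the claim.

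For part 2 (which I read as the inequality $\d(A+x,B+y)\le\d(A,B)\,e^{\min(|x|,|y|)}+|x-y|$, correcting the apparent typo in the statement), I first use the translation identity $d(z,A+x)=d(z-x,A)$ together with the $1$-Lipschitz continuity of $d(\cdot,B)$ to split
\begin{equation*}
|d(z-x,A)-d(z-y,B)|\le|d(z-x,A)-d(z-x,B)|+|d(z-x,B)-d(z-y,B)|,
\end{equation*}
where the second term is bounded by $|x-y|$. Noting that $|z-x|\le\rho+|x|$ when $|z|\le\rho$, the supremum of the first term over $|z|\le\rho$ is bounded by $\d_{\rho+|x|}(A,B)$, so $\d_\rho(A+x,B+y)\le\d_{\rho+|x|}(A,B)+|x-y|$. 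Integrating against $e^{-\rho}$ and applying the substitution $s=\rho+|x|$,
\begin{equation*}
\d(A+x,B+y)\le e^{|x|}\!\int_{|x|}^{\infty}\d_s(A,B)e^{-s}\,ds+|x-y|\le e^{|x|}\d(A,B)+|x-y|.
\end{equation*}
Performing the triangle decomposition the other way (routing through $d(z-y,A)$ instead of $d(z-x,B)$) gives the same estimate with $|y|$ in place of $|x|$; taking whichever bound is smaller produces the desired $e^{\min(|x|,|y|)}$.

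The argument is essentially bookkeeping once the pointwise bounds are in place. The only mild subtlety is organizing the triangle inequality in part 2 so that the change of variables in the integral extracts the factor $e^{\min(|x|,|y|)}$ rather than the weaker $e^{|x|+|y|}$ one would get by iterating translation in $A$ and then in $B$.
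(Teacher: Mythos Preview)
Your proof is correct and follows essentially the same strategy as the paper: establish the bound at the level of $\d_\rho$ via pointwise estimates on $|d(z,\cdot)-d(z,\cdot)|$, then integrate. For part~2 your triangle split and change of variables match the paper's argument almost line for line (the paper just assumes $|x|\le|y|$ up front rather than doing both cases). For part~1 your use of the elementary inequality $|\min_i a_i-\min_i b_i|\le\max_i|a_i-b_i|$ handles all $N$ at once and is a bit cleaner than the paper's induction on $N$ starting from the $N=2$ case, but the underlying idea is the same.
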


\begin{proof}
1) The proof follows from
\begin{equation}\label{eq:step1}
\d_{\rho}(\cup_{i=1}^{N}A_i,\cup_{i=1}^{N}B_i)\le \sum_{i=1}^N\d_{\rho}(A_i,B_i)
\end{equation}
for all $\rho\ge 0$. We only need to prove (\ref{eq:step1}) for $N=2$ and the rest follows from induction on $N$. Note that
{\allowdisplaybreaks \begin{align}
\d_{\rho}(A_1\cup A_2,B_1\cup B_2)&=\max_{\abs{z}\le \rho}\big\vert d(z,A_1\cup A_2)- d(z,B_1\cup B_2)\big\vert\notag\\
&=\big\vert|z_0-a|-|z_0-b|\big\vert,\label{eq:step2}
\end{align}}
where $\abs{z_0}\le \rho$, $a\in A_1\cup A_2$, and $b\in B_1\cup B_2$. We let $a_1\in A_1$, $a_2\in A_2$, $b_1\in B_1$, and $b_2\in B_2$ be such that $|z_0-a_1|=d(z,A_1)$, $|z_0-a_2|=d(z,A_2)$, $|z_0-b_1|=d(z,B_1)$, and $|z_0-b_2|=d(z,B_1)$. Furthermore, $a\in \set{a_1,a_2}$ and $b\in \set{b_1,b_2}$. Therefore, it follows from (\ref{eq:step2}) that
\begin{align*}
\d_{\rho}(A_1\cup A_2,B_1\cup B_2) &\le \big\vert|z_0-a|-|z_0-b|\big\vert\\
 &\le \big\vert|z_0-a_1|-|z_0-b_1|\big\vert + \big\vert|z_0-a_2|-|z_0-b_2|\big\vert\\
 &\le \sum_{i=1}^2\max_{\abs{z}\le \rho}\big\vert d(z,A_i)- d(z,B_i)\big\vert=\sum_{i=1}^2\d_{\rho}(A_i,B_i),
\end{align*}
which gives (\ref{eq:step1}).\\
2) Without loss of generality, suppose that $\abs{x}=\min(\abs{x},\abs{y})$. Note that
\begin{align}
\d_{\rho}(A+x,B+y)&=\max_{\abs{z}\le \rho}\big\vert d(z,A+x)- d(z,B+y)\big\vert\notag\\
&\le\max_{\abs{z}\le \rho}\big\vert d(z,A+x)- d(z,B+x)+\max_{\abs{z}\le \rho}\big\vert d(z,B+x)- d(z,B+y)\big\vert\notag\\
&\le\max_{\abs{z}\le \rho}\big\vert d(z-x,A)- d(z-x,B)\big\vert+\abs{x-y}\notag\\
&\le\max_{\abs{z'}\le \rho+\abs{x}}\big\vert d(z',A)- d(z',B)\big\vert+\abs{x-y}=\d_{\rho+\abs{x}}(A,B)+\abs{x-y}.\label{eq:step3}
\end{align}
Integrating (\ref{eq:step3}) gives
\begin{align*}
\d(A+x,B+y)&=\int_0^{\infty}\d_{\rho}(A+x,B+y)e^{-\rho}d\rho\\
&\le\int_0^{\infty}\d_{\rho+\abs{x}}(A,B)e^{-\rho-\abs{x}}e^{\abs{x}}d\rho+\abs{x-y}\\
&\le e^{\abs{x}}\int_0^{\infty}\d_{\rho'}(A,B)e^{-\rho'}d\rho'+\abs{x-y}= e^{\abs{x}}\d(A,B)+\abs{x-y}.
\end{align*}
Part 2) is proved.
\end{proof}

\begin{proof}[Proof of Lemma \ref{lem:conv}]
It follows from Example 5.19 of \cite{goebel2012hybrid} that $\dom(z)=\lim_{i\ra \infty}\dom z_i$ is a hybrid time domain. Moreover, similar to the proof of Lemma 5.28 in \cite{goebel2012hybrid}, we can show that $z$ is single-valued and locally Lipschitz. Moreover, $\set{z_i(t,j)}_{i=1}^{\infty}$ converge uniformly in $t$ to $z(t,j)$ and $\AD_{[t,j]}z$ on each compact subinterval of $\text{int}(I_j)$. Based on this, we can prove the following: for each subinterval $U$ of the interior of $I_j=\set{t:,(t,j)\in\dom z}$, the sequence of functions $\set{z_i(\cdot,j)\big\vert_{t\in U}}_{i=1}^{\infty}$, which is $z_i(\cdot,j)$ restricted on $U$, converges graphically to $z(\cdot,j)\big\vert_{t\in U}$. { This can be verified by the definition of graphical convergence.}
Based on this result, to show the graphical convergence of $\set{\AD_{[t,j]}z_i}_{i=1}^{\infty}$, we write
$
\gph \AD_{[t,j]}z_i=\cup_{k=j'}^{j} G_k^i,
$
where $j'\le j$ and
$$
G_k^i=\set{(s,k,z):\,(t+s,z)\in \gph z_i(\cdot,j+k),\,s+k\ge -\Delta_{\inf}},
$$
where
$$
\Delta_{\inf}:=\inf\Big\{\delta\ge\Delta:\,\exists(t+s,j+k)\in\dom z_i\;\text{s.t.}\;   s+k=-\delta\Big\}.
$$
We have a similar decomposition for $\gph \AD_{[t,j]}z$, with $z_i$ replaced by $z$. From these two decompositions and using Lemma \ref{lem:setdist}, we can show that that $\gph \AD_{[t,j]}z_i$ graphically converges to $\gph \AD_{[t,j]}z$ by showing that each component in the decomposition converges in the set distance $\d$.

4) First, there exist two sequences $\set{t'_i}_{i=1}^{\infty}$ and $\set{t''_i}_{i=1}^{\infty}$ such that $(t'_i,j)\in \dom(z_i)$, $(t''_i,j+1)\in \dom(z_i)$, $(t'_i,j,z_i(t'_i,j))\ra (t,j,z(t,j))$, and
$(t''_i,j+1,z_i(t''_i,j+1))\ra (t,j+1,z(t,j+1))$ as $i\ra \infty$. Let $s_i\in [t'_i,t''_i]$ be such that $(s_i,j)\in \dom(z_i)$ and $(s_i,j+1)\in \dom(z_i)$. Clearly, $s_i\ra t$ as $i\ra \infty$. We have
\begin{align*}
\abs{z_n(s_i,j)- z(t,j)}\le \abs{z_n(s_i,j)- z_i(t_i',j)}+\abs{z_i(t_i',j)- z(t,j)},
\end{align*}
where $\abs{z_i(t'_i,j)- z(t,j)}\ra 0$ as shown above and $\abs{z_i(s_i,j)- z_i(t'_i,j)}\ra 0$ since $z_i$ is locally uniformly Lipschitz for sufficiently large $i$. Similarly, $z_i(s_i,j+1)\ra z(t,j+1)$ as $n\ra\infty$. 
We can similarly decompose $\gph \AD_{[s_i,j]}z_i$, $\gph \AD_{[t,j]}z$, $\gph \AD_{[s_i,j+1]}z_i$, and $\gph \AD_{[t,j+1]}z$ as we
did above for $\gph \AD_{[t,j]}z_i$ and $\gph \AD_{[t,j]}z$. By showing the set convergence of each component in the decomposition and using Lemma \ref{lem:setdist}, we can show that  show $\limg_{i\ra\infty}\AD_{[s_i,j]}z_i=\AD_{[t,j]}z$ and $\limg_{i\ra\infty}\AD_{[s_i,j+1]}z_i=\AD_{[t,j+1]}z$.
\end{proof}

\end{document}